\algnewcommand{\LineComment}[1]{\Statex \hskip\ALG@thistlm {\color{gray}\texttt{// #1}}}
\title{Bounds for the independence and chromatic numbers of locally sparse graphs}
\date{}
\author{\lsstyle Abhishek~Dhawan}
\thanks{This research was partially supported by the Georgia Tech ARC-ACO Fellowship, NSF grant DMS-2053333 (PI: Cheng Mao), the NSF CAREER grant DMS-2239187 (PI: Anton Bernshteyn), and the NSF RTG grant DMS-1937241.}
\email{adhawan2@illinois.edu}
\address{\textls{\normalfont{}Department of Mathematics, University of Illinois Urbana-Champaign, Urbana, IL, USA}}
\newtheoremstyle{bfnote}%
{}{}%
{\slshape}{}%
{\bfseries}{\bfseries.}%
{ }%
{\thmname{#1}\thmnumber{ #2}\thmnote{ \ep{\normalfont{}#3}}}
\theoremstyle{bfnote}
\newtheorem{theo}{Theorem}[section]
\newtheorem*{theo*}{Theorem}
\newtheorem{prop}[theo]{Proposition}
\newtheorem*{prop*}{Proposition}
\newtheorem{Lemma}[theo]{Lemma}
\newtheorem{claim}{Claim}[theo]
\newtheorem{corl}[theo]{Corollary}
\newtheorem{conj}[theo]{Conjecture}
\newtheorem{fact}[theo]{Fact}
\newtheorem*{corl*}{Corollary}
\newcounter{ForClaims}[section]
\theoremstyle{definition}
\newtheorem{defn}[theo]{Definition}
\newtheorem*{defn*}{Definition}
\newtheorem*{exmp*}{Example}
\theoremstyle{remark}
\newtheorem*{ques*}{Question}
\newtheorem*{remk*}{Remark}
\newtheorem*{obs*}{Observation}
\newcommand*{\myproofname}{Proof}
\newenvironment{claimproof}[1][\myproofname]{\begin{proof}[#1]}{\end{proof}}
\newcommand{\set}[1]{\{#1\}}
\newcommand{\N}{{\mathbb{N}}}
\newcommand{\R}{\mathbb{R}}
\renewcommand{\P}{\mathbb{P}}
\newcommand{\E}{\mathbb{E}}
\renewcommand{\epsilon}{\varepsilon}
\newcommand{\eps}{\epsilon}
\renewcommand{\phi}{\varphi}
\renewcommand{\theta}{\vartheta}
\renewcommand{\leq}{\leqslant}
\renewcommand{\geq}{\geqslant}
\newcommand{\defeq}{\coloneqq}
\newcommand{\im}{\mathrm{im}}
\newcommand{\bemph}[1]{{\normalfont#1}} 
\newcommand{\ep}[1]{\bemph{(}#1\bemph{)}} 
\newcommand{\emphdef}[1]{\textbf{\textit{{#1}}}}
\newcommand{\emphd}[1]{\emphdef{#1}}
\newcommand{\LLL}{\text{Lov\'asz Local Lemma}}
\numberwithin{equation}{section}
\newcommand{\boldk}{\mathbf{k}}
\newcommand{\boldr}{\mathbf{r}}
\newcommand{\boldeps}{\bm{\eps}}
\titleformat{\subsection}[block]{\bfseries}{\thesubsection.}{1ex}{}
\titleformat{\subsubsection}[runin]{\itshape}{\bfseries\upshape\thesubsubsection.}{1ex}{}[.---]
\titleformat{\section}[block]{\scshape\filcenter}{\thesection.}{1ex}{}
\titlespacing*{\section}{0pt}{*3}{*1}
\titlespacing*{\subsection}{0pt}{*3}{*1}
\titlespacing*{\subsubsection}{0pt}{*1.5}{*0}
\setlist{topsep=3pt,itemsep=3pt}
\begin{document}

\vspace*{0pt}

\maketitle
\begin{abstract}
    In this note we consider a more general version of local sparsity introduced recently by Anderson, Kuchukova, and the author. In particular, we say a graph $G = (V, E)$ is $(k, r)$-locally-sparse if for each vertex $v \in V(G)$, the subgraph induced by its neighborhood contains at most $k$ cliques of size $r$. For $r \geq 3$ and $\epsilon \in [0, 1]$, we show that an $n$-vertex $(\Delta^{\epsilon r}, r)$-locally-sparse graph $G$ of maximum degree $\Delta$ satisfies $\alpha(G) \geq (1-o(1))\dfrac{n}{\eta\Delta}$ and $\chi(G) \leq \Theta\left(\eta\Delta\right)$, where $\eta \defeq \epsilon + \dfrac{r\log\log \Delta}{\log \Delta}$. For $\epsilon$ not too large, the hidden constant in the $\Theta(\cdot)$ can be taken to be $1+o(1)$. Setting $\epsilon = 0$, we recover classical results on $K_{r+1}$-free graphs due to Shearer and Johansson, which were more recently improved by Davies, Kang, Pirot, and Sereni. We prove a stronger result on the independence number in terms of the occupancy fraction in the hard-core model, and establish a local version of the coloring result in the more general setting of correspondence coloring.
\end{abstract}

\vspace{0.3in}

\paragraph{\textbf{Basic Notation.}}
All graphs considered here are finite, undirected, and simple.
For $n \in \N$, we let $[n] \defeq \set{1, \ldots, n}$.
For a graph $G$, its vertex and edge sets are denoted $V(G)$ and $E(G)$ respectively.
We say $G$ is \emphd{complete} if every pair of vertices in $G$ form an edge, and \emphd{edgeless} if $G$ contains no edges.

For a vertex $v \in V(G)$, $N_G(v)$ denotes the neighbors of $v$ and $\deg_G(v) \defeq |N_G(v)|$ denotes the degree of $v$.
We let $\Delta(G)$, $\delta(G)$, and $d(G)$ denote the maximum, minimum, and average degrees of $G$, respectively.
Let $N_G[v] \defeq N_G(v) \cup \set{v}$ denote the closed neighborhood of $v$.
For a subset $U \subseteq V(G)$, the subgraph induced by $U$ is denoted by $G[U]$, and $N_G(U)$ is the set of all vertices adjacent to a vertex in $U$, i.e., $N_G(U) = \bigcup_{u \in U}N_G(u)$.
We drop the subscript $G$ when the context is clear.

A set $I \subseteq V(G)$ is \emphd{independent} if $G[I]$ is edgeless, and a set $K \subseteq V(G)$ forms a \emphd{clique} if $G[K]$ is complete.
A proper coloring of $G$ is a function $\phi\,:\,V(G) \to \N$ such that no edge $uv \in E(G)$ satisfies $\phi(u) = \phi(v)$.
The \emphd{independence number} of $G$ (denoted $\alpha(G)$) is the size of the largest independent set in $G$, the \emphd{clique number} (denoted $\omega(G)$) is the size of the largest clique in $G$, and the \emphd{chromatic number} of $G$ (denoted $\chi(G)$) is the minimum value $q$ such that $G$ admits a proper coloring $\phi$ where the image of $\phi$ satisfies $|\im(\phi)| = q$.

\section{Introduction}\label{sec:intro}

\subsection{Background and Results}\label{subsec:background}

It is well-known that a graph $G$ satisfies $\alpha(G) \geq \frac{n}{\Delta(G)+1}$ and $\chi(G) \leq \Delta(G) + 1$.
These bounds are often referred to as the \textit{greedy bounds}.
For $\alpha(G)$, one can obtain a stronger bound of $\frac{n}{d(G)+1}$, which we refer to as the \textit{random greedy bound}.
A natural question is the following: under what structural constraints can we obtain improved bounds on $\alpha(\cdot)$ and $\chi(\cdot)$? 
In this note, we consider the constraint of local sparsity.
For $k \in \R$, we say a graph $G$ is \emphd{$k$-locally-sparse} if for each $v \in V(G)$, the subgraph $G[N(v)]$ contains at most $\lfloor k \rfloor$ edges.
Such a notion has been considered extensively (see for example \cite{alon1999coloring, frieze2006randomly, davies2020algorithmic, pirot2021colouring, dhawan2024palette}).
We consider the following generalization of this notion introduced by Anderson, Kuchukova, and the author in \cite{local_sparse}:

\begin{defn}\label{def:local_sparse}
    For graphs $F$ and $G$, a \emphd{copy} of $F$ in $G$ is a subgraph $H \subseteq G$ which is isomorphic to $F$.
    Let $F$ be a graph and let $k \in \R$.
    A graph $G$ is \emphd{$(k,F)$-sparse} if $G$ contains at most $\lfloor k \rfloor$ copies of $F$ (not necessarily vertex-disjoint).
    A graph $G$ is \emphd{$(k,F)$-locally-sparse} if, for every $v \in V(G)$, the induced subgraph $G[N(v)]$ is $(k, F)$-sparse.
\end{defn}

When $F = K_r$, we simply call such graphs \emphd{$(k, r)$-sparse} or \emphd{$(k, r)$-locally-sparse}.
Note that $K_{r+1}$-free graphs are $(0, r)$-locally-sparse.
There has been a lot of work in this regime.
Ajtai, Koml\'os, and Szemer\'edi first considered the independence number of $K_3$-free graphs.
They showed that an $n$-vertex $K_3$-free graph $G$ of average degree $d$ has independence number $\alpha(G) = \Omega\left(n\log d/d\right)$ \cite{ajtai1980note}.
For $r \geq 3$, the same group along with Erd\H{o}s showed that $\alpha(G) = \Omega\left(n\log\left(\log d/r\right)/(rd)\right)$ for $K_{r+1}$-free graphs $G$ \cite{ajtai1981turan}.
Shearer improved the constant factor in the bound for $K_3$-free graphs \cite{shearer1983note} and went on to prove the following celebrated result for $r \geq 3$:

\begin{theo}[\cite{shearer1995independence}]\label{theo:shearer}
    Let $r \in \N$ such that $r \geq 3$, and let $n, d$ be sufficiently large.
    For any $K_{r+1}$-free graph $G$ on $n$ vertices such that $d(G) = d$, we have the following:
    \[\alpha(G) = \Omega\left(\dfrac{n\,\log d}{r\,d\,\log\log  d}\right).\]
\end{theo}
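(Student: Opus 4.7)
The plan is to bound $\alpha(G)$ from below by analyzing a random greedy independent set and exploiting the $K_{r+1}$-free hypothesis inductively, via the observation that for every vertex $v$ the neighborhood graph $G[N(v)]$ is $K_r$-free.

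First I would consider the following random process: pick a uniformly random permutation $\pi$ of $V(G)$, and construct $I \subseteq V(G)$ by scanning vertices in the order of $\pi$, adding $v$ to $I$ whenever $v$ has no earlier neighbor already in $I$. Since $I$ is always an independent set, $\alpha(G) \geq \E[|I|] = \sum_{v \in V(G)} \P[v \in I]$, so it suffices to lower bound each $\P[v \in I]$ by an appropriate function $\phi_r(d_v) \asymp \log d_v/(r\,d_v\,\log^{(2)} d_v)$, and then apply Jensen's inequality (using convexity of $\phi_r$ on the relevant range) to obtain
\[
\alpha(G) \;\geq\; \sum_{v \in V(G)} \phi_r(d_v) \;\geq\; n\,\phi_r(d).
\]

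To prove the pointwise bound $\P[v \in I] \geq \phi_r(d_v)$, I would induct on $r$. Conditioning on $v$ being the first element of $N[v]$ under $\pi$ (an event of probability $1/(d_v+1)$), the random process restricted to $N(v)$ produces a random independent set $I_v \subseteq N(v)$, and $v \in I$ precisely when $I_v = \varnothing$. Since $G[N(v)]$ is $K_r$-free by the main hypothesis, the inductive hypothesis applied to $G[N(v)]$ supplies quantitative information about the expected size and structure of $I_v$, which I would convert into a lower bound on $\P[I_v = \varnothing]$ via a first-moment or log-concavity argument. The base case can be taken to be Shearer's triangle-free bound for $r = 2$, where the analogous function $f(d) = (d\log d - d + 1)/(d-1)^2$ satisfies the pointwise inequality by a direct computation on the random ordering.

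The main obstacle will be engineering $\phi_r$ so that the recursion closes cleanly. At each inductive level an additional factor of $1/\log^{(2)} d$ must be extracted in the denominator, and the combinatorial loss in passing from the inner-level bound on $\alpha(G[N(v)])$ to the outer-level bound on $\P[v \in I]$ must be absorbed by the extra factor of $1/r$. This is precisely where Shearer's analysis improves upon the earlier Ajtai--Erd\H{o}s--Komlós--Szemerédi argument (which gives only $\log^{(2)} d/(rd)$); the delicate technical step is verifying a differential-type inequality for $\phi_r$ of the shape $-\phi_r'(d) \leq \phi_r(d)/d + \textrm{acceptable error}$, together with checking that $\phi_r$ is sufficiently convex for the Jensen step to not degrade the bound when one passes from the maximum to the average degree.
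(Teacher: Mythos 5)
Your outline has two gaps, one local and one structural. Locally, the inductive step as described is incoherent: if you condition on $v$ being the first element of $N[v]$ under $\pi$, then $v$ enters $I$ with probability $1$, so the claim that ``$v \in I$ precisely when $I_v = \varnothing$'' cannot be what you mean; the event you actually need is that no neighbor of $v$ preceding $v$ in $\pi$ is selected. More importantly, the induction points the wrong way. The hypothesis that $G[N(v)]$ is $K_r$-free tells you (inductively) that the process run inside $N(v)$ produces a \emph{large} independent set, and a large $I\cap N(v)$ makes it \emph{less} likely that $v$ survives; so the inductive information cannot be ``converted into a lower bound on $\P[I_v=\varnothing]$''---it pushes in the opposite direction. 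A pointwise bound $\P[v\in I]\geq\phi_r(d_v)$ with $\phi_r(d)\asymp \log d/(r\,d\,\log^{(2)}d)$ is exactly what one cannot extract from this recursion; the natural output of neighborhood-induction arguments of this type is the weaker Ajtai--Koml\'os--Szemer\'edi--Erd\H{o}s bound of order $n\log^{(2)}d/(rd)$, which you yourself note is not enough.

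The proofs that do reach Shearer's bound (Shearer's original argument, Molloy's, and the generalization proved in \S\ref{sec:independent_theo} of this paper) resolve this tension by replacing the greedy process with a \emph{uniformly random independent set} $I\in I(G)$ and exploiting the trade-off between $p_v=\Pr[v\in I]$ and $\overline p_v=\E[|I\cap N_G(v)|]/\Delta$: one does not lower bound $p_v$ pointwise, but shows that $p_v+\overline p_v$ is large and uses the identity $\hat\alpha(G)=\sum_v p_v=\sum_v\overline p_v$. Making this quantitative requires the one ingredient your outline never produces: a lower bound on the \emph{number} of independent sets in a $K_r$-free graph, $\log_2 i(H)\geq |V(H)|^{1/(r-1)}$ (Lemma~\ref{lemma:molloy}, generalized in Lemma~\ref{lemma:num_independent_sets}), which via Lemma~\ref{lemma:median_mean} converts into the statement that any $S\subseteq N(v)$ carrying many independent sets has average independence number of order $\log d/(r\log^{(2)}d)$. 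That counting lemma is precisely where the gain of $\log d/\log^{(2)}d$ over the greedy analysis enters, and without it (or some substitute) the recursion you propose will not close at Shearer's bound.
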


Our first result is a generalization of the above. 
In particular, rather than considering $K_{r+1}$-free graphs, we consider the case where $G$ contains ``few'' copies of $K_{r+1}$.
Equivalently, on average each vertex is contained in ``few'' copies of $K_{r+1}$.

\begin{theo}\label{theo:independent_avg_deg}
    There exists $\rho > 0$ such that the following holds for $n,\, d$ sufficiently large.
    Let $\eps,\eta \in [0, 1]$, $r \in \N$, and $k \in \R$ be such that
    \[3\,\leq\, r \leq \frac{\rho\,\log\log d}{\log\log\log d}, \quad k \,=\, \frac{n\,d^{\eps\,r}}{r+1}, \quad \text{and} \quad \eta = \eps + \frac{r\log\log d}{\log d}.\]
    For any $n$-vertex $(k, r+1)$-sparse graph $G$ of average degree $d$, we have
    \[\alpha(G) \geq \left(1 - o(1)\right)\dfrac{n}{9\eta d}.\]
\end{theo}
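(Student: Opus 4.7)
The plan is to reduce Theorem~\ref{theo:independent_avg_deg} to the classical Shearer bound (Theorem~\ref{theo:shearer}) via a random-sampling argument that destroys all but a negligible fraction of the $K_{r+1}$-copies while shrinking the vertex set by a controlled factor. Since $G$ has at most $k = nd^{\eps r}/(r+1)$ copies of $K_{r+1}$, sampling each vertex independently with probability $p$ cuts the expected number of surviving copies by a factor of $p^{r+1}$, whereas the vertex count only drops by a factor of $p$. The crucial ratio $k p^r/n = (p d^{\eps})^r/(r+1)$ becomes arbitrarily small once $p = c\,d^{-\eps}$ for a sufficiently small constant $c > 0$, which motivates the choice $p \sim d^{-\eps}$.

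Concretely, we would sample each vertex of $G$ independently with probability $p \defeq c\,d^{-\eps}$ to form $V' \subseteq V(G)$. Linearity of expectation gives $\E[|V'|] = pn$, $\E[e(G[V'])] = p^2 nd/2$, and the expected number of surviving $K_{r+1}$-copies is at most $c^r pn/(r+1)$. For $\eps \leq 1/2$ the quantity $pn$ grows polynomially in $d$, so a Chernoff bound yields $|V'| \geq pn/2$ with high probability, and two applications of Markov's inequality let us simultaneously enforce $e(G[V']) \leq 2p^2 nd$ and at most $c^r pn/2$ surviving $K_{r+1}$-copies with positive probability. Removing one vertex per surviving copy then produces $V'' \subseteq V'$ for which $G[V'']$ is $K_{r+1}$-free with $|V''| = \Theta(pn)$ and average degree $d(G[V'']) = O(pd) = O(d^{1-\eps})$. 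Applying Theorem~\ref{theo:shearer} to $G[V'']$ yields
\[\alpha(G) \,\geq\, \alpha(G[V'']) \,=\, \Omega\!\left(\frac{|V''|\,\log(pd)}{r\,pd\,\log^{(2)}(pd)}\right) \,=\, \Omega\!\left(\frac{n\,(1-\eps)\log d}{r\,d\,\log^{(2)} d}\right).\]

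It then remains to check that this matches $\Omega(n/(\gamma d))$ in every regime. When $\gamma = r\log^{(2)}d/\log d$, the hypothesis forces $\eps = o(1)$, so $1-\eps \geq 1/2$ and the bound becomes $\Omega(n\log d/(rd\log^{(2)} d)) = \Omega(n/(\gamma d))$. When $\gamma = \eps \in (r\log^{(2)}d/\log d,\,1/2]$ we have $1-\eps \geq 1/2$ and $\eps \log d \geq r\log^{(2)}d$, hence $\eps(1-\eps)\log d \geq (r\log^{(2)}d)/2$ and the bound is $\Omega(n/(\eps d))$. Finally, for $\eps > 1/2$ the sampling step is wasteful, but here $\gamma > 1/2$ and the trivial random greedy bound $\alpha(G) \geq n/(d+1)$ from Algorithm~\ref{alg:independent_set} already gives $\Omega(n/(\gamma d))$.

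The main obstacle will be the deterministic extraction of a single $V'$ satisfying all three concentration requirements at once, which forces one to verify that $pn$, $p^2 nd$, and $kp^{r+1}$ each tend to infinity throughout the relevant range of $\eps$ so that the Chernoff and Markov bounds above apply; the assumption $r \leq \rho\log^{(2)}d/\log^{(3)}d$ plays a role here in keeping the hidden $(r+1)$-dependent factors under control. A secondary but important subtlety is the factor $(1-\eps)$ that the reduced average degree $pd = d^{1-\eps}$ introduces inside $\log(pd)$: this factor degrades as $\eps \to 1$ and is exactly what forces the $\eps > 1/2$ regime to be handled separately by the greedy bound rather than by Shearer.
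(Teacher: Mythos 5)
Your argument is correct in outline, but it takes a genuinely different route from the paper. The paper never sparsifies randomly: it partitions $V(G)$ into the vertices of degree at most $3d$ and the vertices whose neighborhoods contain at most $3d^{\eps r}$ copies of $K_r$, shows by two Markov-type counts that their intersection $V_3$ has size at least $n/3$, and then applies its locally-sparse maximum-degree result (Corollary~\ref{corl:independent}, i.e.\ ultimately Theorem~\ref{theo:independent} via the hard-core-model argument) to $G[V_3]$, which is $(\Delta(G[V_3])^{\tilde\eps r}, r)$-locally-sparse with $\tilde\eps = \Theta(\eps)$. You instead subsample each vertex with probability $p = c\,d^{-\eps}$, kill the $O(c^r p n)$ surviving $(r+1)$-cliques by deleting one vertex each, and invoke the classical Shearer bound on the resulting $K_{r+1}$-free graph; the regime split at $\eps = 1/2$ (Shearer versus greedy) and the $\eps(1-\eps)\log d \geq r\log^{(2)}d/2$ verification are both correct. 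What your route buys is independence from the paper's main technical machinery --- Theorem~\ref{theo:independent_avg_deg} becomes a short consequence of Shearer alone --- at the cost of not yielding the locally-sparse, average-independence-number, or coloring results that the paper's Lemma~\ref{lemma:num_independent_sets} is really built for. One caveat you should address: you apply Theorem~\ref{theo:shearer} with $r$ growing like $\log^{(2)}d/\log^{(3)}d$, whereas its formal statement fixes $r$ before taking $n, d$ sufficiently large; the paper only asserts in its concluding remarks (not in the theorem statement) that Shearer's bound holds uniformly for $r = O(\log^{(2)}d)$, and it is precisely to control this $r$-dependence explicitly that the paper reproves the independence bound from scratch. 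You should also note explicitly that if the sparsified average degree $O(pd)$ falls below Shearer's threshold, the greedy bound on $G[V'']$ already gives $\Omega(pn) = \Omega(nd^{-\eps}) \gg n/(\gamma d)$ in the range $\eps \leq 1/2$.
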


A few remarks are in order.
First, we note that the value $k$ above implies that on average a vertex $v \in V(G)$ is contained in at most $d(G)^{\eps\,r}$ copies of $K_{r+1}$ for such graphs.
Additionally, for $\eps = O(r\log\log d/\log d)$, we obtain the same growth rate as Theorem~\ref{theo:shearer} with the weaker constraint of $(k, r+1)$-sparsity as opposed to $K_{r+1}$-freeness.
Finally, for $\eps = \Omega(r\log\log d/\log d)$ we improve upon the random greedy bound by a factor of roughly $1/\eps$.

Theorem~\ref{theo:independent_avg_deg} follows as a corollary to the following result for locally sparse graphs (see \S\ref{sec:corl} for a proof of the implication):

\begin{theo}\label{theo:independent}
    There exists $\rho > 0$ such that the following holds for $n,\,\Delta$ sufficiently large.
    Let $\eps,\eta \in [0, 1]$, $r \in \N$, and $k \in \R$ be such that
    \[3\,\leq\, r \leq \frac{\rho\,\log\log \Delta}{\log\log\log \Delta}, \quad k \,=\, \Delta^{\eps\,r}, \quad \text{and} \quad \eta = \eps + \frac{r\log\log \Delta}{\log \Delta}.\]
    For any $n$-vertex $(k, r)$-locally-sparse graph $G$ of maximum degree $\Delta$, we have
    \[\alpha(G) \geq (1-o(1))\dfrac{n}{\eta\Delta}.\]
\end{theo}

We note that $k \in [1, \Delta^r]$. 
In particular, as a vertex can contain at most $\binom{\Delta}{r}$ cliques of size $r$ in its neighborhood, this range covers all possible values of $k$.
We prove Theorem~\ref{theo:independent} as a corollary to a more general result on the expected size of an independent set drawn from the hard-core model on $G$ (see Theorem~\ref{theo:occ_frac}).

When considering colorings, the best-known growth rate on the chromatic number for $K_{r+1}$-free graphs is due to Johansson.
Molloy provided a simpler proof of the result by employing the so-called \textit{entropy compression method} \cite{molloy2019list}.
Bernshteyn simplified the proof further by employing the \textit{lopsided local lemma} and extended the result to \textit{correspondence coloring} (described in \S\ref{subsec:corr}) \cite{bernshteyn2019johansson}.
Davies, Kang, Pirot, and Sereni built upon these ideas to prove the following result, which provides the current best-known bound:

\begin{theo}[\cite{DKPS}]\label{theo:dkps}
    For $r \geq 3$, let $G$ be a $K_{r+1}$-free graph of maximum degree $\Delta$ sufficiently large.
    Then,
    \[\chi(G) \leq (1+o(1))\dfrac{r\,\Delta\,\log\log \Delta}{\log\Delta}.\]
\end{theo}

We note that the result holds in the more general settings of \textit{list and correspondence coloring} described in \S\ref{subsec:corr}.
The proof of Theorem~\ref{theo:dkps} relies on certain auxiliary results in the proof of Theorem~\ref{theo:shearer}.
Adapting their strategy, we prove the following bound on the chromatic number of locally sparse graphs:

\begin{theo}\label{theo:coloring}
    There exists $\rho > 0$ such that the following holds for $\Delta$ sufficiently large.
    Let $\eps,\,\eta \in [0, 1]$, $r \in \N$, and $k \in \R$ be such that
    \[3\,\leq\, r \leq \frac{\rho\,\log\log \Delta}{\log\log\log \Delta}, \quad k \,=\, \Delta^{\eps\,r}, \quad \text{and} \quad \eta = \eps + \frac{r\log\log \Delta}{\log \Delta}.\]
    For any $(k, r)$-locally-sparse graph $G$ of maximum degree $\Delta$, we have
    \[\chi(G) \leq (1+o(1))\eta\Delta\min\left\{2,\,\left(\frac{1 + \eps}{1-\eps}\right)\right\}.\]
\end{theo}

For $\eps = o(r\log\log \Delta/\log\Delta)$, we obtain the same result as Theorem~\ref{theo:dkps} with the weaker assumption of local sparsity.
For larger $\eps$, our results improve upon the greedy bound of $\Delta + 1$ by a factor of roughly $\eps$.
Additionally, our results hold for list and correspondence colorings as well (see Corollary~\ref{corl:correspondence}).

Note that rather than having a strict bound on the clique number of $G$ (as is the case in Theorem~\ref{theo:dkps}), we consider the case that vertices are not contained in ``too many'' small cliques.
To see the versatility of our results, consider a $(k, r)$-locally-sparse graph $G$ of maximum degree $\Delta$ sufficiently large.
It is not too difficult to see that $\omega(G) = O(rk^{1/r})$ by considering the neighborhood of a vertex contained in a maximum clique.
In particular, for $k = \Delta^{\eps r}$, Theorem~\ref{theo:dkps} provides the bound $\chi(G) = O(r\Delta^{1+\eps}\log\log \Delta/\log \Delta)$, which is significantly larger than that provided by Theorem~\ref{theo:coloring}.
A result of Bonamy, Kelly, Nelson, and Postle \cite[Theorem~1.6]{bonamy2022bounding} (further improved by Davies, Kang, Pirot, and Sereni in terms of the constant factor \cite[Theorem~30]{DKPS}) provides a stronger bound of $O\left(\Delta\sqrt{\eps + (\log r/\log \Delta)}\right)$, which is still weaker than our result.

\subsection{List and Correspondence Coloring}\label{subsec:corr}

Introduced independently by Vizing \cite{vizing1976coloring} and Erd\H{o}s, Rubin, and Taylor \cite{erdos1979choosability}, \textit{list coloring} is a generalization of graph coloring in which each vertex is assigned a color from its own predetermined list of colors. 
Formally, $L : V(G) \to 2^{\N}$ is a \textit{list assignment} for $G$, and an \textit{$L$-coloring} of $G$ is a proper coloring of $G$ such that each vertex $v \in V(G)$ receives a color from its list $L(v)$.
When $|L(v)| \geq q$ for each $v \in V(G)$, where $q \in \N$, we say $L$ is \textit{$q$-fold}. 
The \textit{list-chromatic number} of $G$, denoted $\chi_{\ell}(G)$, is the smallest $q$ such that $G$ has an $L$-coloring for every $q$-fold list assignment $L$ for $G$.

\textit{Correspondence coloring} (also known as DP-coloring) is a generalization of list coloring introduced by Dvo\v{r}\'ak and Postle \cite{dvovrak2018correspondence}. 
Just as in list coloring, each vertex is assigned a list of colors, $L(v)$;
in contrast to list coloring, though, the identifications between the colors in the lists are allowed to vary from edge to edge.
That is, each edge $uv \in E(G)$ is assigned a matching $M_{uv}$ (not necessarily perfect and possibly empty) from $L(u)$ to $L(v)$.
A proper correspondence coloring is a mapping $\phi : V(G) \to \N$ satisfying $\phi(v) \in L(v)$ for each $v \in V(G)$ and $\phi(u)\phi(v) \notin M_{uv}$ for each $uv \in E(G)$.
Formally, we describe correspondence colorings in terms of an auxiliary graph known as a \textit{correspondence cover} of $G$.
The definition below appears in earlier works of the author along with Anderson and Bernshteyn \cite{anderson2022coloring, anderson2023colouring}.

\begin{defn}[Correspondence Cover]\label{def:corr_cov}
    A \emphd{correspondence cover} (also known as a \emphd{DP-cover}) of a graph $G$ is a pair $\mathcal{H} = (L, H)$, where $H$ is a graph and $L \,:\,V(G) \to 2^{V(H)}$ such that:
    \begin{enumerate}[label= \ep{\normalfont DP\arabic*}, leftmargin = \leftmargin + 1\parindent]
        \item The set $\set{L(v)\,:\, v\in V(G)}$ forms a partition of $V(H)$.
        \item\label{dp:list_independent} For each $v \in V(G)$, $L(v)$ is an independent set in $H$.
        \item\label{dp:matching} For each $u, v\in V(G)$, the edges of the induced subgraph $H[L(u) \cup L(v)]$ form a matching; this matching is empty whenever $uv \notin E(G)$.
    \end{enumerate}
\end{defn}

We call the vertices of $H$ \emphd{colors}.
If two colors $c$, $c' \in V(H)$ are adjacent in $H$, we say that they \emphd{correspond} to each other.
An \emphd{$\mathcal{H}$-coloring} is a mapping $\phi \colon V(G) \to V(H)$ such that $\phi(v) \in L(v)$ for all $v \in V(G)$.
An $\mathcal{H}$-coloring $\phi$ is \emphd{proper} if the image of $\phi$ is an independent set in $H$.

A correspondence cover $\mathcal{H} = (L,H)$ is \emphdef{$q$-fold} if $|L(v)| \geq q$ for all $v \in V(G)$. 
The \emphdef{correspondence chromatic number} of $G$, denoted by $\chi_{c}(G)$, is the smallest $q$ such that $G$ admits a proper $\mathcal{H}$-coloring for every $q$-fold correspondence cover $\mathcal{H}$.
As correspondence coloring generalizes list coloring, which in turn generalizes ordinary coloring, we have
\begin{align}\label{eq:relationship}
    \chi(G) \,\leq\, \chi_\ell(G) \,\leq\, \chi_c(G).
\end{align}

Our main coloring result is on \textit{local correspondence colorings} of locally sparse graphs. 
A correspondence cover is \textit{local} if for each vertex $v \in V(G)$, the size of its list $L(v)$ is a function of the local structure of $G$ with respect to $v$.
This notion has gathered much interest in recent years in both vertex coloring \cite{davies2020coloring, DKPS, bonamy2022bounding} and edge coloring \cite{bonamy2020edge, dhawan2023multigraph}.
Before we state our result, we introduce the following definition, which is a local version of Definition~\ref{def:local_sparse} for $F = K_r$.

\begin{defn}\label{def:local_local_sparse}
    Let $G$ be a graph and let $\boldk\,:\, V(G) \to \R$ and $\boldr\,:\, V(G) \to \N$.
    A graph $G$ is \emphd{$(\boldk, \boldr)$-locally-sparse} if, for every $v \in V(G)$, the induced subgraph $G[N(v)]$ is $(\boldk(v), \boldr(v))$-sparse.
\end{defn}

We are now ready to state our result.

\begin{theo}\label{theo:coloring_local_occ}
    There exists $\rho > 0$ such that the following holds for $\mu > 0$ and $\Delta_0$ sufficiently large.
    Let $G$ be a graph of maximum degree $\Delta \geq \Delta_0$ and let $\boldk \,:\, V(G) \to \R$, $\boldr\,:\,V(G) \to \N$, and $\boldeps \,:\, V(G) \to [0, 1]$ be such that $G$ is $(\boldk, \boldr)$-locally-sparse.
    Let $\eps_{\max} = \max_u\boldeps(u)$ and $r_{\max} = \max_u \boldr(u)$.
    Suppose the following hold for each $v \in V(G)$:
    \begin{enumerate}[label= \ep{\normalfont L\arabic*}, leftmargin = \leftmargin + 1\parindent]
        \item\label{item:deg_bound_intro} $\deg(v) \geq \Delta^{\min\set{2\eps_{\max},\, (1+\eps_{\max})/2}}\left(\log(8\Delta^4)\right)^{r_{\max}}$, and
        \item $3 \leq \boldr(v) \leq \rho\log\log \deg(v)/\log\log\log \deg(v)$ and $\boldk(v) = \deg(v)^{\boldeps(v)\boldr(v)}$.
    \end{enumerate}
    If $\mathcal{H} = (L, H)$ is a correspondence cover of $G$ satisfying
    \[\forall u \in V(G), \qquad |L(u)| \geq (1+\mu)\min\left\{2,\,\left(\frac{1 + \eps_{\max}}{1-\eps_{\max}}\right)\right\}\deg(u)\left(\boldeps(u) + \frac{r\log\log\deg(u)}{\log \deg(u)}\right),\]
    then $G$ admits a proper $\mathcal{H}$-coloring.
\end{theo}

A few remarks are in order.
Note that the minimum degree condition \ref{item:deg_bound_intro} is only valid if $\eps_{\max} \leq 1 - o(1)$.
However, for $\eps_{\max} \geq 1/2$, the list sizes implied by the theorem are worse than the greedy bound and so we may assume that $\eps_{\max}$ is not too large in our proofs.
Regardless, the minimum degree condition is rather strict for large values of $\eps_{\max}$ (for instance, $\eps_{\max} = \Omega(1)$).
It turns out we can considerably relax this condition while sacrificing the constant factor involved.

\begin{theo}\label{theo:coloring_local_general_result}
    There exists $\rho > 0$ such that the following holds for $\mu > 0$ and $\Delta$ sufficiently large.
    Let $G$ be a graph of maximum degree $\Delta$ and let $\boldk \,:\, V(G) \to \R$, $\boldr\,:\,V(G) \to \N$, and $\boldeps \,:\, V(G) \to [0, 1]$ be such that $G$ is $(\boldk, \boldr)$-locally-sparse and the following hold for each $v \in V(G)$:
    \begin{enumerate}[label= \ep{\normalfont M\arabic*}, leftmargin = \leftmargin + 1\parindent]
        \item $\deg(v) \geq \log^2\Delta$, and
        \item $3 \leq \boldr(v) \leq \rho\log\log \deg(v)/\log\log\log \deg(v)$ and $\boldk(v) = \deg(v)^{\boldeps(v)\boldr(v)}$.
    \end{enumerate}
    If $\mathcal{H} = (L, H)$ is a correspondence cover of $G$ satisfying
    \[\forall u \in V(G), \qquad |L(u)| \geq (30+\mu)\deg(u)\left(\boldeps(u) + \frac{r\log\log\deg(u)}{\log \deg(u)}\right),\]
    then $G$ admits a proper $\mathcal{H}$-coloring.
\end{theo}

We note that a local correspondence coloring version of Theorem~\ref{theo:dkps} for $r \geq 3$ appeared in work of Davies, Kang, Pirot, and Sereni \cite[Theorem 30 (iii)]{DKPS} (see also \cite[Corollary~1.9]{bonamy2022bounding}).
As a corollary, we obtain a correspondence coloring version of Theorem~\ref{theo:coloring} (see \S\ref{sec:corl} for the proof), which implies Theorem~\ref{theo:coloring} as a result of \eqref{eq:relationship}.

\begin{corl}\label{corl:correspondence}
    There exists $\rho > 0$ such that the following holds for $\Delta$ sufficiently large.
    Let $\eps,\eta \in [0, 1]$, $r \in \N$, and $k \in \R$ be such that
    \[3\,\leq\, r \leq \frac{\rho\,\log\log \Delta}{\log\log\log \Delta}, \quad k \,=\, \Delta^{\eps\,r}, \quad \text{and} \quad \eta = \eps + \frac{r\log\log \Delta}{\log \Delta}.\]
    For any $(k, r)$-locally-sparse graph $G$ of maximum degree $\Delta$, we have
    \[\chi_c(G) \leq (1+o(1))\eta\Delta\min\left\{2,\,\left(\frac{1 + \eps}{1-\eps}\right)\right\}.\]
\end{corl}

\subsection{Concluding Remarks}

In this paper, we generalize classical results of Shearer and Johansson on $K_{r+1}$-free graphs to $(k, r)$-locally-sparse-graphs.
Furthermore, we prove local versions of our coloring result in the setting of correspondence coloring, which extends work of Bonamy, Kelly, Nelson, and Postle, and Davies, Kang, Pirot, and Sereni.
While not explicitly stated in their papers, Theorems~\ref{theo:shearer} and \ref{theo:dkps} hold for $r = O(\log\log d)$ and $r = O(\log\log \Delta)$, respectively.
(For Theorem~\ref{theo:dkps}, this range can be extended to $r = O(\log\Delta)$ with a different bound on $\chi(G)$.)
Our proof fails for larger $r$ and we leave it as an open problem to extend this range.

We conclude this section with a discussion for arbitrary graphs $F$.
A simple counting argument shows the following fact:

\begin{fact}
    Let $G$ be a graph, $k \in \R$ and let $F$ be an arbitrary graph on $r$ vertices.
    If $G$ is $(k, F)$-locally-sparse, then $G$ is $(\tilde k, r)$-locally-sparse, where 
    \[\tilde k = \frac{\lceil k\rceil\,|\mathsf{Aut}(F)|}{r!}.\]
    Here, $\mathsf{Aut}(F)$ is the set of automorphisms of $F$.
\end{fact}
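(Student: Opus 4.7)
The plan is to use a straightforward double-counting argument relating copies of $F$ and copies of $K_r$ inside the neighborhood of any fixed vertex. Fix $v \in V(G)$ and set $N \defeq G[N(v)]$. The goal is to bound the number of copies of $K_r$ in $N$ in terms of the number of copies of $F$ in $N$, which by hypothesis is at most $\lfloor k \rfloor$.

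I would count in two ways the set of pairs $(K, F')$, where $K$ is a copy of $K_r$ in $N$ and $F' \subseteq K$ is a copy of $F$. For the first direction, since $F$ has $r$ vertices and $K_r$ contains every graph on $r$ vertices as a subgraph, the number of copies of $F$ inside a fixed $K_r$ equals the number of injective maps $V(F) \to V(K_r)$ divided by $|\mathsf{Aut}(F)|$; every such injection is automatically a homomorphism since the target is complete, so this count equals $r!/|\mathsf{Aut}(F)|$. Hence the number of pairs equals $(\#\,K_r\text{-copies in }N)\cdot r!/|\mathsf{Aut}(F)|$.

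For the second direction, observe that a copy $F'$ of $F$ in $N$ sits inside a copy of $K_r$ if and only if the vertex set $V(F')$ (which has size exactly $r$) spans a clique in $N$; in that case, the only $K_r$-copy containing $F'$ is the one on vertex set $V(F')$. Consequently each $F$-copy in $N$ contributes to at most one pair, so the total number of pairs is at most $\lfloor k \rfloor$. Combining the two counts gives
\[
\#\,K_r\text{-copies in }N \;\leq\; \frac{\lfloor k\rfloor\,|\mathsf{Aut}(F)|}{r!} \;\leq\; \frac{\lceil k\rceil\,|\mathsf{Aut}(F)|}{r!} \;=\; \tilde k,
\]
and since the left-hand side is an integer it is in fact at most $\lfloor \tilde k\rfloor$. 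Since $v$ was arbitrary, $G$ is $(\tilde k, r)$-locally-sparse, as claimed.

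There is no real obstacle; the only mild subtlety is checking that distinct $K_r$-copies give rise to disjoint families of $F$-copies (so that the pair-count equality does not overcount), and this is automatic because each $F$-copy has a unique vertex set and this vertex set determines which $K_r$-copy, if any, contains it. The use of $\lceil k\rceil$ rather than $\lfloor k\rfloor$ in the statement is simply a loose but convenient way to absorb any fractional part of $k$.
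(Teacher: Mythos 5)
Your double-counting argument is correct, and it is precisely the ``simple counting argument'' the paper alludes to without writing out: each copy of $K_r$ in $G[N(v)]$ contains exactly $r!/|\mathsf{Aut}(F)|$ copies of $F$, each copy of $F$ lies in at most one copy of $K_r$ (its vertex set determines the clique), and the conclusion follows. No gaps.
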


In particular, our results in this section imply the following:

\begin{theo}\label{theo:general_F}
    There exists $\rho > 0$ such that the following holds for $n,\,\Delta$ sufficiently large.
    Let $\eps,\eta \in [0, 1]$, $r \in \N$, and $k \in \R$ be such that
    \[3\,\leq\, r \leq \frac{\rho\,\log\log \Delta}{\log\log\log \Delta}, \quad k \,=\, \Delta^{\eps\,r}, \quad \text{and} \quad \eta = \eps + \frac{r\log\log \Delta}{\log \Delta}.\]
    Let $F$ be an arbitrary graph on $r$ vertices.
    Then for any $n$-vertex $(k, F)$-locally-sparse graph $G$ of maximum degree $\Delta$, we have
    \[\alpha(G) = \Omega\left(\frac{n}{\eta\Delta}\right) \quad \text{and} \quad \chi_c(G) = O\left(\eta\Delta\right),\]
    where the constant factors in the $\Omega(\cdot)$ and $O(\cdot)$ may depend on $F$.
\end{theo}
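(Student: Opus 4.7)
The plan is to exploit the Fact stated just before the theorem, which converts $(k, F)$-local-sparsity into $(\tilde{k}, r)$-local-sparsity for $\tilde{k} = \lceil k \rceil \,|\mathsf{Aut}(F)|/r!$, and then to apply Corollary~\ref{corl:independent} and Corollary~\ref{corl:correspondence} directly. The substantive work has already been done in the proofs of those corollaries, so the entire argument reduces to tracking how the parameters behave under the transformation.

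First I would bound $\tilde{k}$ using $|\mathsf{Aut}(F)| \leq r!$: this gives $\tilde{k} \leq \lceil k \rceil \leq \Delta^{\eps r} + 1$, which for $\Delta$ sufficiently large is at most $2\Delta^{\eps r}$. (The degenerate case $\tilde{k} < 1$ means each neighborhood is $K_r$-free and is handled by the $\eps = 0$ instance of the corollaries.) Next I would define $\tilde{\eps} \in [0,1]$ by $\tilde{k} = \Delta^{\tilde{\eps} r}$, which is admissible since $1 \leq \tilde{k} \leq \Delta^r$, and deduce
\[\tilde{\eps} \,\leq\, \eps + \frac{\log 2}{r\,\log\Delta}.\]
Under the hypothesis $r \leq \rho\log^{(2)}\Delta/\log^{(3)}\Delta$, the additive slack on the right is $o(r\log^{(2)}\Delta/\log\Delta)$, so a short case split on whether $\gamma = \eps$ or $\gamma = r\log^{(2)}\Delta/\log\Delta$ dominates shows that $\tilde{\gamma} \defeq \max\{\tilde{\eps},\,r\log^{(2)}\Delta/\log\Delta\}$ satisfies $\tilde{\gamma} = (1 + o(1))\gamma$, hence $\tilde{\gamma}\Delta = O(\gamma\Delta)$. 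Applying Corollary~\ref{corl:independent} and Corollary~\ref{corl:correspondence} to $G$ with the parameters $(\tilde{k}, \tilde{\eps}, \tilde{\gamma})$ then yields $\alpha(G) = \Omega(n/(\tilde{\gamma}\Delta)) = \Omega(n/(\gamma\Delta))$ and $\chi_c(G) = O(\tilde{\gamma}\Delta) = O(\gamma\Delta)$, which are the desired bounds.

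There is no genuine obstacle here: the hard analytic content is packaged inside the two corollaries, and the reduction is a clean counting step via the Fact. The only small points requiring care are ensuring that $\tilde{\eps}$ and $\tilde{\gamma}$ lie in $[0,1]$, which follow from $1 \leq \tilde{k} \leq \Delta^r$ for $\Delta$ large, and choosing the constant $\rho$ in the statement to be no larger than the minimum of the analogous constants in Corollaries~\ref{corl:independent} and \ref{corl:correspondence}. The theorem permits $F$-dependent hidden constants as a safeguard, although the estimate above in fact yields an absolute leading constant.
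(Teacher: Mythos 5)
Your proposal is correct and follows exactly the route the paper intends: the paper gives no explicit proof of Theorem~\ref{theo:general_F} beyond stating the Fact and asserting that the earlier results imply it, and your reduction (bounding $\tilde{k} \leq \lceil k\rceil \leq 2\Delta^{\eps r}$, absorbing the $\log 2/(r\log\Delta)$ slack into $\gamma$ via the lower bound $\gamma \geq r\log^{(2)}\Delta/\log\Delta$, and invoking Corollaries~\ref{corl:independent} and~\ref{corl:correspondence}) is precisely the missing bookkeeping. Your handling of the edge cases ($\tilde{k}<1$ and capping $\tilde{k}$ at $\binom{\Delta}{r}r!/|\mathsf{Aut}(F)| \leq \Delta^r$ so that $\tilde{\eps}\in[0,1]$) matches the remark the paper makes immediately after the theorem statement.
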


Note that for any graph $F$ on $r$ vertices, we have $\binom{\Delta}{r}\,r!/|\mathsf{Aut}(F)| \leq \Delta^r$.
In particular, the above result covers all possible values of $k$.
Anderson, Kuchukova, and the author proved a better asymptotic bound on $\chi_c(G)$ when $F$ is bipartite and $k$ is not too large.

\begin{theo}[\cite{local_sparse}]\label{theo:bipartite_F}
    Let $F$ be a bipartite graph and let $\Delta$ be sufficiently large.
    For $k \in \R$ satisfying $1/2 \leq k \leq \Delta^{|V(F)|/10}$, the following holds.
    For any $(k, F)$-locally-sparse graph $G$ of maximum degree $\Delta$, we have
    \[\chi_c(G) \leq \frac{8\,\Delta}{\log\left(\Delta\,k^{-1/|V(F)|}\right)}.\]
\end{theo}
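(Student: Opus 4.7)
My plan is to reduce Theorem~\ref{theo:bipartite_F} to a Johansson-style correspondence-coloring theorem for locally edge-sparse graphs, by using the bipartite structure of $F$ to convert the $(k,F)$-local-sparsity hypothesis into an edge-count bound on each neighborhood. Write $r \defeq |V(F)|$ and fix a bipartition $(A,B)$ of $V(F)$ with $|A|=a \leq b=|B|$, so $a+b=r$ and $F \subseteq K_{a,b}$.

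\textbf{Step 1 (bounding neighborhood edges).} Fix $v\in V(G)$ and set $n\defeq|N(v)|\leq\Delta$. The goal is to show
\[
|E(G[N(v)])| \;\leq\; C_F\cdot \Delta\cdot k^{1/r},
\]
for a constant $C_F$ depending only on $F$. The main tool is a supersaturation form of the K\H{o}v\'ari--S\'os--Tur\'an theorem applied to $K_{a,b}$ (and hence to $F$): whenever $m \gg n^{2-1/a}$, a graph on $n$ vertices with $m$ edges contains $\Omega_F(m^{ab}\,n^{-(2ab-r)})$ labelled copies of $F$. Combining this with the hypothesis that $G[N(v)]$ has at most $k$ copies of $F$, using $n \leq \Delta$, and leveraging the range $k \leq \Delta^{r/10}$, one deduces the edge bound above (for $k$ close to $1/2$, the Tur\'an bound $|E(G[N(v)])| \leq C_F \cdot \Delta^{2-1/a}$ takes over and must be seen to be compatible with $O(\Delta k^{1/r})$ in the final coloring step).

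\textbf{Step 2 (applying a Johansson-type coloring theorem).} With each neighborhood containing at most $\Delta^2/\zeta$ edges for $\zeta \defeq \Delta\, k^{-1/r}$, I would invoke a correspondence-coloring version of the Alon--Krivelevich--Sudakov theorem, in the spirit of the local result of Bonamy, Kelly, Nelson, and Postle~\cite{bonamy2022bounding}: any graph of maximum degree $\Delta$ whose neighborhoods contain at most $\Delta^2/\zeta$ edges satisfies $\chi_c(G) = O(\Delta/\log\zeta)$. Substituting $\zeta = \Delta k^{-1/r}$ yields $\chi_c(G) = O(\Delta/\log(\Delta k^{-1/r}))$. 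The explicit constant $8$ can be extracted by tracking constants through the nibble/lopsided local lemma framework used in the simplified proofs of Theorem~\ref{theo:joh} due to Molloy~\cite{molloy2019list} and Bernshteyn~\cite{bernshteyn2019johansson}.

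\textbf{Main obstacle.} The subtle part is Step 1, namely extracting the exponent $1/r$ (rather than the weaker $1/(ab)$) on $k$. A naive application of supersaturation gives $m = O_F(k^{1/(ab)}\,\Delta^{2-r/(ab)})$, which matches the target $O_F(\Delta k^{1/r})$ only when $ab \leq r$; for balanced bipartite $F$ such as $K_{r/2,r/2}$ with $r$ even, one has $ab > r$ and a finer argument is needed. I expect one resolves this either by an iterative cleaning argument (repeatedly removing an edge from each remaining copy of $F$ and applying K\H{o}v\'ari--S\'os--Tur\'an to the residual $F$-free subgraph, at the cost of $k$ edges) or by optimizing supersaturation across all bipartite subgraphs of $F$; the hypothesis $k \leq \Delta^{r/10}$ is precisely calibrated to keep these arguments in the supersaturation regime. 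A secondary subtlety is that the constant $8$ in the final bound must be tracked carefully in the nibble analysis and does not fall out of an asymptotic statement.
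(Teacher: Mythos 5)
First, a framing point: this paper does not prove Theorem~\ref{theo:bipartite_F} — it is quoted verbatim from \cite{local_sparse} — so there is no in\-/paper argument to compare yours against; I can only assess your proposal on its own terms.

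On those terms, Step 1 contains a genuine gap, and it is worse than the exponent mismatch you flag: the target inequality $|E(G[N(v)])| \leq C_F\,\Delta\,k^{1/r}$ is simply false as a consequence of "$G[N(v)]$ is an $n$-vertex $(k,F)$-sparse graph with $n \leq \Delta$". Take $F = K_{3,3}$ (so $r = 6$) and $k = 1/2$, which the theorem permits; by Definition~\ref{def:local_sparse} this only says the neighborhood contains no copy of $K_{3,3}$, and Brown's construction gives $K_{3,3}$-free graphs on $\Delta$ vertices with $\Theta(\Delta^{5/3})$ edges, vastly exceeding $C_F\,\Delta\,k^{1/r} = \Theta(\Delta)$. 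More generally, for bipartite $F$ with $e(F) > |V(F)|$, both K\H{o}v\'ari--S\'os--Tur\'an supersaturation and your iterative\-/cleaning fallback yield at best $m \leq \Delta^2/f$ with $\log f$ only a $\Theta(1/a)$ or $\Theta(r/e(F))$ fraction of $\log(\Delta k^{-1/r})$; feeding this into any Alon--Krivelevich--Sudakov-type theorem gives $\chi_c(G) \leq C_F\,\Delta/\log(\Delta k^{-1/r})$ with $C_F$ growing with $F$, which cannot recover the absolute constant $8$ in the statement (the "$\Delta$ sufficiently large" clause absorbs additive, not multiplicative, $F$-dependence). The cleaning argument also has a second failure mode: for $|V(F)| \geq 21$ the hypothesis allows $k \leq \Delta^{|V(F)|/10} > \binom{\Delta}{2}$, so "delete one edge per copy of $F$" removes more edges than a neighborhood can contain and the resulting bound is vacuous, even though the conclusion of the theorem is still nontrivial in that regime.

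The upshot is that reducing to an edge\-/sparsity coloring theorem is the wrong skeleton here, not a correct skeleton missing one lemma. Any proof achieving an absolute leading constant must exploit the $F$-sparse structure of the neighborhoods directly — e.g., via lower bounds on the number of independent sets or on local occupancy of $(k,F)$-sparse graphs, in the spirit of Lemmas~\ref{lemma:num_independent_sets} and~\ref{lemma:median_mean} of this paper and of the treatment of locally-$F$-free graphs that \cite{local_sparse} builds on — rather than passing through a neighborhood edge count. Your Step 2 (invoking a correspondence\-/coloring version of Alon--Krivelevich--Sudakov, as in \cite{bonamy2022bounding}) is fine in principle, but it is fed by a false input.
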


This led the authors to make the following conjecture which Theorem~\ref{theo:general_F} constitutes some progress toward:

\begin{conj}[{\cite[Conjecture~1.20]{local_sparse}}]\label{conj:adk}
    For every graph $F$, the following holds for $\Delta \in \N$ sufficiently large.
    Let 
    \[1/2 \,\leq\, k \,<\, \Delta^{|V(F)|},\]
    and let $G$ be a $(k,F)$-locally-sparse graph of maximum degree $\Delta$.
    Then, 
    \[\chi_c(G) = O\left(\frac{\Delta}{\log\left(\Delta k^{-1/|V(F)|}\right)}\right),\] 
    where the constant factor in the $O(\cdot)$ may depend on $F$.
\end{conj}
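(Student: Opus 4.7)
My plan is to close the gap between the partial result of Theorem~\ref{theo:general_F} and the conjectured bound by sharpening the independence-number estimate underlying the author's proof---namely, Lemma~\ref{lemma:bad}---so that it is sensitive to the specific structure of $F$ rather than merely to $|V(F)|$. Concretely, I would first try to prove an $F$-sensitive analog of Lemma~\ref{lemma:molloy}: an $n$-vertex $F$-free graph $H$ satisfies $\log_2 i(H) \geq c_F\, n^{\beta_F}$, where the exponent $\beta_F$ and constant $c_F > 0$ depend on $F$ alone and improve on Molloy's $1/(|V(F)|-1)$ whenever $F \neq K_{|V(F)|}$. Combined with the standard ``delete $k$ edges to kill all copies of $F$'' reduction, this would promote Lemma~\ref{lemma:bad} to an $F$-sensitive bound $\log_2 i(G) \geq c_F\, n^{\beta_F} - O(k)$ for $(k, F)$-sparse $G$. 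For bipartite $F$, a bound of this shape should follow from Tur\'an-type edge counting together with the Shearer/Molloy machinery, and would, in principle, recover the improved estimate of Theorem~\ref{theo:bipartite_F}.

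With such a lemma in hand, I would use it to strengthen the lower bound on the median independence number $\overline{\alpha}(G[N(v)])$ employed in the author's proof of Theorem~\ref{theo:coloring_local}. Plugging the sharpened estimate into the Bonamy--Kelly--Nelson--Postle framework (which the author already invokes for Theorem~\ref{theo:coloring_local}) should then deliver the conjectured $\chi_c(G) = O\bigl(\Delta/\log(\Delta k^{-1/|V(F)|})\bigr)$, via the same architecture used in the paper under review.

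The main obstacle is handling non-bipartite $F$. If $F$ itself contains a clique on $s$ vertices, then a $(k, F)$-sparse graph can contain cliques of size $\Omega(s\, k^{1/s})$---a dense $K_s$ is very cheap in terms of $F$-copies---so the Shearer/Molloy independent-set bound, which is essentially tight only for graphs of bounded clique number, cannot be invoked as a black box. A naive reduction to a spanning bipartite subgraph $F' \subseteq F$ is too wasteful, because every copy of $F$ contains a copy of $F'$ but not vice versa, so the $(k, F)$-sparsity count inflates by the number of $F$-completions of a fixed $F'$ before $F'$-sparsity can be applied. I expect that overcoming this obstacle will require either a dedicated independent-set lower bound for graphs avoiding a non-complete small subgraph---a genuinely new Tur\'an-type input---or a fundamentally different technique, perhaps a hypergraph container argument in the spirit of Saxton--Thomason applied to the clique-hypergraph of $F$-copies.
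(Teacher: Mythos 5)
The statement you are trying to prove is Conjecture~\ref{conj:adk}: it is an \emph{open conjecture} imported from \cite{local_sparse}, and the paper contains no proof of it. The author explicitly presents Theorem~\ref{theo:general_F} only as ``some progress toward'' the conjecture. Your submission is likewise not a proof but a research program, and its central steps are not only unproven but, in at least one case, provably unattainable by the route you describe. Note first that for $F = K_r$ and $k = 1/2$, being $(k,F)$-locally-sparse is the same as being $K_{r+1}$-free, and the conjectured bound becomes $\chi_c(G) = O(\Delta/\log\Delta)$ for $K_{r+1}$-free graphs --- a long-standing open problem (the best known bound, Theorem~\ref{theo:joh}, carries the extra factor $r\log^{(2)}\Delta$). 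Your plan hinges on an ``$F$-sensitive'' strengthening of Lemma~\ref{lemma:molloy} with exponent $\beta_F$ exceeding $1/(|V(F)|-1)$, but you concede this only helps when $F$ is not a clique; for $F$ a clique the exponent in Lemma~\ref{lemma:molloy} is essentially best possible (Ramsey-type constructions give $K_r$-free graphs with $\alpha(G)$, and hence $\log_2 i(G) \leq \alpha(G)\log_2 n$, close to $n^{1/(r-1)}$ up to logarithmic factors), so no sharpening of that lemma can close the gap even in the most basic instance of the conjecture.

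There is a second, more structural problem with the plan. Your proposed promotion to $\log_2 i(G) \geq c_F\, n^{\beta_F} - O(k)$ carries an \emph{additive} loss in $k$. This is exactly the weakness of Lemma~\ref{lemma:bad}, which the paper explicitly identifies as insufficient: it yields only Theorem~\ref{theo:weak}, and the paper's actual technical contribution (Lemma~\ref{lemma:num_independent_sets}) replaces the additive loss by a \emph{multiplicative} dependence of the form $(n/k^{1/r})^{1/(r-1)}$. An additive $-O(k)$ term is vacuous once $k$ exceeds $n^{\beta_F}$, i.e., already for $k$ a small power of $\Delta$ when applied to neighborhoods, so your sharpened lemma would not even recover Theorem~\ref{theo:bipartite_F} in its stated range $k \leq \Delta^{|V(F)|/10}$, let alone the conjecture's range $k < \Delta^{|V(F)|}$. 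Finally, you candidly acknowledge that the non-bipartite case requires ``a genuinely new Tur\'an-type input'' or a container argument you do not supply. In sum: the conjecture remains open, and the proposal does not constitute a proof.
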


The rest of this paper is structured as follows:
in \S\ref{sec:prelim}, we provide an overview of our strategy; in \S\ref{sec:corl}, we prove some of our results that are obtained as corollaries to more general statements;
in \S\ref{sec:ISET_loc_occ}, we prove our main result on the independence number; and in \S\ref{sec:coloring_loc_occ}, we prove our main coloring results.

\section{Preliminaries and overview}\label{sec:prelim}

In this section, we provide an overview of our proof, which implements the techniques developed by Davies, Kang, Pirot, and Sereni in their proof of Theorem~\ref{theo:dkps}.
In particular, we employ the so-called \textit{local occupancy framework}.
We will split this section into two subsections: in \S\ref{sec:local_occ}, we introduce the framework; and in \S\ref{sec:overview}, we discuss the strategy in more detail.

\subsection{The hard-core model and the local occupancy framework}\label{sec:local_occ}

Given a graph $G$, we let $\mathcal{I}(G)$ denote the set of independent sets of $G$.
Given $\lambda > 0$, the \emphd{hard-core model on $G$ at fugacity $\lambda$} is a probability distribution on $\mathcal{I}(G)$, where each independent set $I \in \mathcal{I}(G)$ occurs with probability proportional to $\lambda^{|I|}$ (see \cite{davies2025hard} for a recent survey on the method with a focus on applications to graph coloring).
Formally, writing $\mathbf{I}$ as the random independent set, we have
\[\forall I \in \mathcal{I}(G), \qquad \P[\mathbf{I} = I] = \frac{\lambda^{|I|}}{Z_G(\lambda)},\]
where the normalizing term in the denominator is known as the \emphd{partition function} or \emphd{independence polynomial}.
For $\lambda,\,\beta, \gamma > 0$, we say the hard-core model on $G$ at fugacity $\lambda$ has \emphd{local $(\beta, \gamma)$-occupancy} if for each $u \in V(G)$ and each induced subgraph $F \subseteq G[N(u)]$, we have
\begin{align}\label{eqn: local occupancy}
    \beta\,\frac{\lambda}{1 + \lambda}\,\frac{1}{Z_F(\lambda)} + \gamma\,\frac{\lambda\,Z_F'(\lambda)}{Z_F(\lambda)} \geq 1.
\end{align}

Let us briefly motivate the above.
Let $\mathbf{I}$ be an independent set in $G$ drawn from the hard-core model at fugacity $\lambda$.
For each $u \in V(G)$, define the following random variable:
\[X_u = \left\{ \begin{array}{cc}
    \beta & \text{if } u \in \mathbf{I}; \\
    \gamma|N(u) \cap \mathbf{I}| & \text{if } u \notin \mathbf{I}.
\end{array}\right.\]
(The familiar reader may recognize this strategy as akin to \textit{Shearer-type} arguments for independent sets; see, e.g., \cite{shearer1995independence, alon1996independence}.)
Fix an independent set $J \in \mathcal{I}(G - N[u])$ and let $F_J \coloneqq \set{v \in N(u)\,:\, N(v) \cap J = \emptyset}$, i.e., $F_J$ consists of the vertices $v \in N(u)$ such that $J\cup \set{v}$ is an independent set in $G$.
It is not too difficult to see that the random variable $\mathbf{I} \cap N(u) \mid \mathbf{I} \setminus N[u] = J$ is distributed according to the hard-core model on $G[F_J]$ at fugacity $\lambda$ (see, e.g.,~\cites[p.~16]{DKPS}[\S3]{davies2025hard}).
If the random variable $\mathbf{I} \cap N(u) \mid \mathbf{I} \setminus N[u] = J$ is the empty set, then $u \in \mathbf{I}$ with probability precisely $\lambda/(1+\lambda)$.
In particular, \eqref{eqn: local occupancy} is equivalent to 
\[\forall J \in \mathcal{I}(G - N[u]), \qquad \E[X_u \mid \mathbf{I} \setminus N[u] = J] \geq 1.\]
With this in mind, we can think of \eqref{eqn: local occupancy} as a weighted version of the following statement: \textit{either $u \in \mathbf{I}$ or $|N(u) \cap \mathbf{I}|$ is large}.

The above argument can be formalized in the following result:

\begin{theo}[{cf. \cite{davies2020coloring, davies2021occupancy, davies2017independent, davies2018average}}]\label{theo:loc_occ_ISET}
    Suppose $G$ is a graph of maximum degree $\Delta$ such that the hard-core model on $G$ at fugacity $\lambda$ has local $(\beta, \gamma)$-occupancy for some $\lambda, \beta, \gamma > 0$. 
    Let $\mathbf{I}$ be an independent set in $G$ drawn from the hard-core model at fugacity $\lambda$.
    Then,
    \[\frac{\E[|\mathbf{I}|]}{|V(G)|} = \frac{1}{|V(G)|}\frac{\lambda Z_G'(\lambda)}{Z_G(\lambda)} \geq \frac{1}{\beta + \gamma \Delta}.\]
\end{theo}

The quantity $\frac{1}{|V(G)|}\frac{\lambda Z_G'(\lambda)}{Z_G(\lambda)}$ above is referred to as the \emphd{occupancy fraction} of $G$.
Practically speaking, if one can determine $\beta, \gamma > 0$ such that $G$ has local $(\beta, \gamma)$-occupancy with respect to the hard-core model on $G$, one immediately obtains a bound on the occupancy fraction (and, hence, the independence number).
As mentioned earlier, we obtain Theorem~\ref{theo:independent} as a corollary to a result on the occupancy fraction of locally sparse graphs, which we now state.

\begin{theo}\label{theo:occ_frac}
    There exists $\rho > 0$ such that the following holds for $n,\,\Delta$ sufficiently large.
    Let $\eps,\eta \in [0, 1]$, $r \in \N$, and $k \in \R$ be such that
    \[3\,\leq\, r \leq \frac{\rho\,\log\log \Delta}{\log\log\log \Delta}, \quad k \,=\, \Delta^{\eps\,r}, \quad \text{and} \quad \eta = \eps + \frac{r\log\log \Delta}{\log \Delta}.\]
    For any $n$-vertex $(k, r)$-locally-sparse graph $G$ of maximum degree $\Delta$ and any $\lambda > 0$, the occupancy fraction of the hard-core model on $G$ at fugacity $\lambda$ satisfies the following as $\Delta \to \infty$:
    \[\frac{1}{|V(G)|}\,\frac{\lambda Z_G'(\lambda)}{Z_G(\lambda)} \geq \frac{(1 - o(1))}{\eta\,\Delta}.\]
\end{theo}

Davies, Kang, Pirot, and Sereni showed that if $G$ satisfies certain additional constraints (see Theorem~\ref{theo:DKPS 12}), one can also show that $\chi_c(G) \leq (1 + o(1))\gamma\Delta$.
In fact, one can obtain local coloring results with a more ``local'' notion of local occupancy.

\begin{defn}\label{defn: local occupancy}
    Given a graph $G$, a positive real $\lambda$, and a collection $(\beta_u, \gamma_u)_u$ of pairs of positive reals indexed over the vertices of $G$, we say that the hard-core model on $G$ at fugacity $\lambda$ has \emphd{local $(\beta_u, \gamma_u)_u$-occupancy} if, for each $u \in V (G)$ and each induced subgraph $F$ of the subgraph $G[N(u)]$ induced by the neighborhood of $u$, it holds that
    \[\beta_u\,\frac{\lambda}{1 + \lambda}\,\frac{1}{Z_F(\lambda)} + \gamma_u\,\frac{\lambda\,Z_F'(\lambda)}{Z_F(\lambda)} \geq 1.\]
    We refer to the situation where the above holds for all $F$ (not necessarily induced) as \emphd{strong local $(\beta_u, \gamma_u)_u$-occupancy}.
\end{defn}

\subsection{Proof Overview}\label{sec:overview}

As mentioned earlier, our approach closely follows that of Davies, Kang, Pirot, and Sereni's proof of Theorem~\ref{theo:dkps}.
Their proof proceeds via the following steps:
\begin{enumerate}[label= \ep{\normalfont Step\arabic*}, leftmargin = \leftmargin + 1\parindent]
    \item\label{step:ramsey} Compute a lower bound on $z \defeq \log Z_F(\lambda)$ for $K_r$-free graphs.
    \item\label{step:partition} Use this result to obtain a lower bound on the occupancy fraction for $K_r$-free graphs in terms of $z$.
    \item\label{item:betagamma} Given $\lambda > 0$, determine $\beta,\gamma > 0$ such that the hard-core model at fugacity $\lambda$ on any $K_{r+1}$-free graph $G$ of maximum degree $\Delta$ has local $(\beta, \gamma)$-occupancy.
    \item\label{final_blow} Complete the proof using Theorems~\ref{theo:loc_occ_ISET} and \ref{theo:DKPS 12}.
\end{enumerate}

The key challenges posed in adapting this approach to our setting appear in \ref{step:ramsey}.
This part of their proof relies on a simple upper bound for the off-diagonal Ramsey number $R(s, t)$, however, it can be proved using the following result due to Molloy (which is implicitly used in their proof):

\begin{Lemma}[{\cite[Lemma~13]{molloy2019list}}]\label{lemma:molloy}
    For any $r \geq 3$, let $G$ be a $K_r$-free graph on $n$ vertices.
    Then, 
    \[\alpha(G) \,\geq\, n^{1/(r-1)}.\]
\end{Lemma}

As our goal is to emulate this strategy, we need a lower bound on the independence number of $(k, r)$-sparse graphs $G$. 
As every $(k, r)$-sparse graph $G$ contains a $K_r$-free subgraph $H$ such that $|V(H)| \geq n - k$, we may conclude the following as a corollary to Lemma~\ref{lemma:molloy}:

\begin{Lemma}\label{lemma:bad}
    For any $r \geq 3$, let $G$ be a $(k, r)$-sparse graph on $n$ vertices.
    Then, 
    \[\alpha(G) \,\geq\, (n-k)^{1/(r-1)}.\]
\end{Lemma}

However, this result is not strong enough to prove our results.
In fact, with this lemma, we may prove the following weaker result:

\begin{theo}\label{theo:weak}
    There exists $\rho > 0$ such that the following holds for $n,\,\Delta$ sufficiently large.
    Let $\eps,\eta \in [0, 1]$, $r \in \N$, and $k \in \R$ be such that
    \[3\,\leq\, r \leq \frac{\rho\,\log\log \Delta}{\log\log\log \Delta}, \quad k \,=\, \Delta^{\eps\,r}, \quad \text{and} \quad \eta = \eps\,r + \frac{r\log\log \Delta}{\log \Delta}.\]
    For any $n$-vertex $(k, r)$-locally-sparse graph $G$ of maximum degree $\Delta$, we have
    \[\alpha(G) = \Omega\left(\dfrac{n}{\eta \Delta}\right) \quad \text{and} \quad \chi_c(G) = O\left(\eta\Delta\right).\]
\end{theo}

We do not include a proof of Theorem~\ref{theo:weak}, however, it can be inferred by our arguments.
Note that for $r = O(1)$, the above bounds match those of Theorems~\ref{theo:independent} and \ref{theo:coloring}.
However, for larger $r$, the results of Theorems~\ref{theo:independent} and \ref{theo:coloring} are much stronger.
The key part of our proof, therefore, is a stronger version of Lemma~\ref{lemma:bad} (see Lemma~\ref{lemma:independent set in kr sparse}).

When considering \ref{final_blow} for our coloring result, we apply Theorem~\ref{theo:DKPS 12} just as in the proof of Theorem~\ref{theo:dkps}.
This leads to the drawback mentioned earlier (see the discussion after the statement of Theorem~\ref{theo:coloring_local_occ}).
One of the conditions in Theorem~\ref{theo:DKPS 12} roughly says the following about $G[N(u)]$ for every $u$: \textit{every ``large'' subgraph has ``many'' independent sets}.
There is no freedom in the notion of ``many'', which results in the rather strict minimum degree condition (in particular, we must take $\ell \approx \Delta^{\eps_{\max}}$).
Unfortunately, this notion of ``many'' cannot be relaxed in their current proof as it is a key element in proving certain probability bounds necessary for their application of the \textit{lopsided \LLL}.

A similar result of Bonamy, Kelly, Nelson, and Postle (see Theorem~\ref{theo:BKNP}) has a slightly weaker condition on $G[N(u)]$ for every $u$: \textit{in every subgraph with ``many'' independent sets, half of them are ``large''}.
The key distinction is that we have some control over the parameters that determine the notions of ``many'' and ``large''. 
This allows us to considerably relax the minimum degree condition, however, this comes at the price of a worse constant factor.

\section{Proof of Corollaries}\label{sec:corl}

Let us first show how Theorem~\ref{theo:independent_avg_deg} follows from Theorem~\ref{theo:independent}.

\begin{proof}[Proof of Theorem~\ref{theo:independent_avg_deg}]
    Consider the following sets of vertices:
    \begin{align*}
        V_1 &\defeq \set{v \in V(G)\,:\, \deg(v) \leq 3d}, \\
        V_2 &\defeq \set{v \in V(G)\,:\, G[N(v)] \text{ is } (3d^{\eps\,r}, r)\text{-sparse}}, \\
        V_3 &\defeq V_1\cap V_2.
    \end{align*}
    Let us show that $|V_3| \geq n/3$.
    First, we note the following:
    \[n\,d = \sum_{v \in V(G)} \deg_G(v) \,\geq\, 3d(n - |V_1|) \,\implies\, |V_1| \geq 2n/3.\]
    For each $v \in V(G)$, let $n_v$ denote the number of copies of $K_r$ in $G[N_G(v)]$.
    By definition of $(k, r+1)$-sparsity, we have the following
    \[(r+1)\,k = \sum_{v \in V(G)}n_v \,\geq\, 3d^{\eps\,r}(n - |V_2|) \,\implies\, |V_2| \geq 2n/3.\]
    With the above in hand, we conclude
    \[|V_3| = |V_1| + |V_2| - |V_1\cup V_2| \,\geq\, 2n/3 + 2n/3 - n = n/3,\]
    as claimed.
    
    Let $G'\subseteq G$ be the subgraph induced by the vertices in $V_3$.
    Since an independent set in $G'$ is also independent in $G$, it is enough to show that $G'$ contains an independent set of the desired size.
    Suppose $\Delta(G') \leq 3\eta d$.
    Then we have the following as a result of the greedy bound:
    \[\alpha(G') \geq (1-o(1))\frac{|V_3|}{\Delta(G')} = (1-o(1))\frac{n}{9\,\eta d},\]
    as desired.
    If not, by definition of $V_1$, we have $3\eta d < \Delta(G') \leq 3d$.
    Furthermore, by definition of $V_2$, $G'$ is $(\Delta(G')^{\tilde \eps r}, r)$-locally-sparse, where
    \[\tilde \eps \defeq \min\left\{1,\,\frac{\eps \log d + \log 3}{\log \left(\Delta(G')\right)}\right\}.\]
    As a result of the range of $\Delta(G')$, we may conclude the following for $d$ large enough:
    \begin{align*}
        \eta' &= \tilde \eps + \frac{r\log\log \Delta(G')}{\log \Delta(G')} 
        \leq (1+o(1))\eps + \frac{r\log\log (3\eta d)}{\log(3\eta d)} 
        \leq (1+o(1))\eta.
    \end{align*}
    Applying Theorem~\ref{theo:independent} to $G'$, we have
    \[\alpha(G') \geq (1-o(1))\frac{|V_3|}{\eta'\,\Delta(G')} \geq (1-o(1))\frac{n}{9\,\eta d},\]
    completing the proof.
\end{proof}

The following proposition will be key in proving Corollary~\ref{corl:correspondence}.
Although the proof is rather standard (see, e.g., \cite[\S1.5 and Exercise 12.4]{MolloyReed}), we include the details in \S\ref{appendix:prop} for completeness.

\begin{prop}\label{prop:embedding}
    Let $G$ be a graph of maximum degree $\Delta$ and let $\boldk\,:\,V(G) \to \R$ and $\boldr\,:\,V(G) \to \N$ be such that $G$ is $(\boldk, \boldr)$-locally-sparse.
    For any $\delta \leq \Delta$, there is a graph $G'$ and functions $\tilde \boldk\,:\,V(G') \to \R$ and $\tilde \boldr\,:\,V(G') \to \N$ such that $G'$ is $(\tilde\boldk, \tilde\boldr)$-locally-sparse and the following hold for $j \defeq \max\set{\delta - \delta(G), 0}$:
    \begin{enumerate}[label= \ep{\normalfont I\arabic*}, leftmargin = \leftmargin + 1\parindent]
        \item\label{item:delta} $\delta(G') \geq \delta$,
        \item\label{item:Delta} $\Delta(G') = \Delta$,
        \item\label{item:size} $|V(G')| = |V(G)|\,2^{j}$, and
        \item\label{item:loc_sparsity} for $\ell \defeq 2^{j}$, there exist homomorphisms $\phi_1, \ldots, \phi_\ell\,:\,G\to G'$ such that 
        \begin{itemize}
            \item for each $v \in V(G)$ and $\ell' \in [\ell]$, we have $\boldk(v) = \tilde \boldk (\phi_{\ell'}(v))$ and $\boldr(v) = \tilde \boldr (\phi_{\ell'}(v))$, and
            \item for $\ell_1, \ell_2 \in [\ell]$ such that $\ell_1 \neq \ell_2$, the images $\im(\phi_{\ell_1})$ and $\im(\phi_{\ell_2})$ are vertex-disjoint.
        \end{itemize}
    \end{enumerate}
\end{prop}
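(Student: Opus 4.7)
The plan is to build $G'$ by iteratively doubling the vertex set and attaching a matching between corresponding copies of those vertices whose degree has not yet reached $\delta$---a standard trick for boosting minimum degree without raising maximum degree. Concretely, I would define a chain $G = G_0, G_1, \ldots, G_j$ and set $G' \defeq G_j$. The vertex set of $G_i$ is identified with $V(G) \times \{0,1\}^i$; the construction will be symmetric under permutations of the appended coordinates, so the degree of $(v, s)$ in $G_i$ depends only on $v$, and I write $d_i(v)$ for this common value (with $d_0(v) = \deg_G(v)$). To pass from $G_i$ to $G_{i+1}$, I take two vertex-disjoint copies of $G_i$, indexed by a new coordinate in $\{0,1\}$, and for every $v \in V(G)$ with $d_i(v) < \delta$ insert the matching edges $\{(v, s, 0)(v, s, 1) : s \in \{0,1\}^i\}$.

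With this setup $d_{i+1}(v) = d_i(v) + \bbone[d_i(v) < \delta]$, so $d_j(v) \geq \min\{\deg_G(v) + j, \delta\}$; choosing $j = \max\{\delta - \delta(G), 0\}$ forces $\delta(G_j) \geq \delta$, which is \ref{item:delta}. Because matching edges are only attached to vertices with current degree strictly less than $\delta \leq \Delta$, one also has $d_j(v) \leq \max\{\deg_G(v), \delta\} \leq \Delta$; combined with the fact that $G$ sits inside $G_j$ as a single slice, this gives \ref{item:Delta}, while the size bound \ref{item:size} is immediate from the twofold blow-up at each step. For the homomorphisms in \ref{item:loc_sparsity}, I enumerate $\{0,1\}^j$ as $\{s^{(1)}, \ldots, s^{(\ell)}\}$ and set $\phi_{\ell'}(v) \defeq (v, s^{(\ell')})$; each slice $V(G) \times \{s^{(\ell')}\}$ induces a copy of $G$ in $G_j$ (the ``in-copy'' edges), so every $\phi_{\ell'}$ is a graph homomorphism, and distinct slices are vertex-disjoint by construction.

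The heart of the argument is verifying local sparsity under the assignment $\tilde{\boldk}((v, s)) \defeq \boldk(v)$ and $\tilde{\boldr}((v, s)) \defeq \boldr(v)$. For $w = (v, s) \in V(G')$, I would decompose the neighborhood as $N_{G'}(w) = A \sqcup B$, where $A \defeq \{(u, s) : u \in N_G(v)\}$ collects in-copy neighbors and $B$ collects the across-copy neighbors $(v, s')$ obtained by flipping a single matching-active coordinate at $v$. A direct check using the two edge types shows that $G'[A] \cong G[N_G(v)]$, that $G'[B]$ is edgeless (any two of its members differ in exactly two coordinates of $\{0,1\}^j$, forbidding both in-copy edges and matching edges), and that no edge crosses $A$ and $B$ (such a pair would differ in both the base coordinate and the $s$-coordinate, ruling out both edge types). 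Consequently every clique of size $\geq 2$ in $G'[N_{G'}(w)]$ lies entirely in $A$, so the number of copies of $K_{\boldr(v)}$ in $G'[N_{G'}(w)]$ matches the count in $G[N_G(v)]$, which is at most $\boldk(v) = \tilde{\boldk}(w)$, as required.

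The main point of care is ensuring the doubling is symmetric enough that $d_i(v)$ is well-defined, but this is immediate by induction because the decision to attach a matching edge at $v$ in step $i+1$ is made uniformly across all appended-coordinate slices; I expect no other substantive obstacle, and the remaining work is just careful bookkeeping of the coordinate scheme.
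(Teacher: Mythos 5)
Your proposal is correct and follows essentially the same construction as the paper: iteratively doubling the graph and inserting a matching between copies of vertices whose current degree is below $\delta$, with $\tilde{\boldk}$, $\tilde{\boldr}$ inherited from the base vertex. The only cosmetic difference is that you verify local sparsity by analyzing $N_{G'}(w)$ in the final graph all at once (via the $A \sqcup B$ decomposition), whereas the paper tracks it inductively, observing that each doubling step adds at most one isolated vertex to each neighborhood; the two arguments are equivalent.
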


We note that we do not require the proposition in its full generality, however, we include it in this form as it may find use in future work on this topic.
Let us now prove Corollary~\ref{corl:correspondence} under the assumption that Theorem~\ref{theo:coloring_local_occ} is true.

\begin{proof}[Proof of Corollary~\ref{corl:correspondence}]
    For each $v \in V(G)$, define 
    \[\boldr(v) \defeq r, \quad \text{and} \quad \boldk(v) \defeq k.\]
    Note that $G$ is $(\boldk, \boldr)$-locally-sparse.
    Form $G'$ by applying Proposition~\ref{prop:embedding} with $\delta = \Delta$.
    Let $\boldeps(v) = \eps$ for each $v \in V(G')$.
    Note that $G'$ satisfies the conditions of Theorem~\ref{theo:coloring_local_occ}.
    In particular, for any correspondence cover $\mathcal{H} = (L, H)$ of $G'$ satisfying 
    \[|L(v)| \geq (1+o(1))\min\left\{2,\,\left(\frac{1 + \eps_{\max}}{1-\eps_{\max}}\right)\right\}\deg_{G'}(v)\left(\boldeps(u) + \frac{r\log\log\deg_{G'}(v)}{\log \deg_{G'}(v)}\right),\]
    $G'$ admits a proper $\mathcal{H}$-coloring.
    As $G'$ is $\Delta$-regular, $\boldeps(v) = \eps$ for all $v \in V(G')$, and $G$ is isomorphic to a subgraph of $G'$ by \ref{item:loc_sparsity}, the claim follows.
\end{proof}

\section{Independent sets in locally sparse graphs}\label{sec:ISET_loc_occ}

In this section, we will prove Theorem~\ref{theo:occ_frac}.
For convenience, we restate it here:

\begin{theo*}[Restatement of Theorem~\ref{theo:occ_frac}]
    There exists $\rho > 0$ such that the following holds for $n,\,\Delta$ sufficiently large.
    Let $\eps,\eta \in [0, 1]$, $r \in \N$, and $k \in \R$ be such that
    \[3\,\leq\, r \leq \frac{\rho\,\log\log \Delta}{\log\log\log \Delta}, \quad k \,=\, \Delta^{\eps\,r}, \quad \text{and} \quad \eta = \eps + \frac{r\log\log \Delta}{\log \Delta}.\]
    For any $n$-vertex $(k, r)$-locally-sparse graph $G$ of maximum degree $\Delta$ and any $\lambda > 0$, the occupancy fraction of the hard-core model on $G$ at fugacity $\lambda$ satisfies the following as $\Delta \to \infty$:
    \[\frac{1}{|V(G)|}\,\frac{\lambda Z_G'(\lambda)}{Z_G(\lambda)} \geq \frac{(1 - o(1))}{\eta\,\Delta}.\]
\end{theo*}

En route to the above result, we will prove some auxiliary lemmas that will be crucial to our arguments in \S\ref{sec:coloring_loc_occ}.
We will split this section into two subsections:
in the first, we will analyze the hard-core model for $(k, r)$-sparse graphs;
in the next, we will determine the local occupancy parameters for locally sparse graphs and complete the proof of Theorem~\ref{theo:occ_frac}.

\subsection{The hard-core model for $(k, r)$-sparse graphs}

The random greedy bound on $\alpha(G)$ is a classical result due to Tur\'an, which will play a key role in proving our bound on the independence number of $(k, r)$-sparse graphs.
For a proof of the following statement, see the section following chapter 6 of \cite{alon2016probabilistic}:

\begin{Lemma}\label{lemma:greedy}
    For any $n$-vertex graph $G$ of average degree $d$, we have $\alpha(G) \geq \dfrac{n}{1+d}$.
\end{Lemma}

The next lemma provides a general lower bound on the independence number of $(k, r)$-sparse graphs and will be key in proving the main results of this paper as mentioned in \S\ref{sec:overview}.
We note that our main results only consider the case that $r \geq 3$, however, including the case $r = 2$ below provides for a simpler proof.

\begin{Lemma}\label{lemma:independent set in kr sparse}
    Let $r, n \in \N$ and $k \in \R$ such that $r\geq 2$, $n \geq r^{2r}$, and $k \geq 1$.
    Let $G$ be an $n$-vertex $(k, r)$-sparse graph. Then,
    \[\alpha(G) \,\geq\, \frac{1}{r}\left(\dfrac{n}{k^{1/r}}\right)^{1/(r-1)}.\]
\end{Lemma}

\begin{proof}
    We may assume without loss of generality that $k \leq \binom{n}{r}$ as the proof is trivial otherwise.
    We will prove this by induction on $r$.
    
    As a base case, let $r = 2$.
    We remark that it is enough to have $n \geq 3$ here, however, by the assumptions of the lemma, we have $n \geq 16$.
    By Lemma~\ref{lemma:greedy}, we have an independent set of size at least 
    \[\frac{n}{1 + d(G)}.\]
    As $G$ is $(k, 2)$-sparse, we have $|E(G)| \leq k$.
    It follows that 
    \[\frac{n}{1 + d(G)} \,\geq\, \frac{n}{1 + 2k/n}.\]
    It is now enough to show that the above is at least $n/(2\sqrt{k})$.
    To this end, we consider the following function:
    \[f(k) = 2\sqrt{k} - 2k/n - 1.\]
    Note that $f(k) \geq 0$ for $1 \leq k \leq \binom{n}{2}$ implies the desired result.
    Consider the following for $n \geq 16$:
    \begin{align*}
        f(1) &= 2 - 2/n - 1 > 0, \\
        f(n^2/2) &= (\sqrt{2} - 1)n - 1 > 0, \\
        f'(k) &= k^{-1/2} - 2/n \geq 0 \iff k \leq \frac{n^2}{4}.
    \end{align*}
    It follows that $f$ is increasing for $1 \leq k \leq n^2/4$ and decreasing for $n^2/4 < k \leq n^2/2$.
    As $f(1), f(n^2/4), f(n^2/2) > 0$, we conclude $f(k) > 0$ for $1 \leq k \leq \binom{n}{2}$, as desired.

    Now let us consider $r \geq 3$.
    We define the following parameters:
    \begin{align*}
        \alpha_r &\defeq \left(\frac{r-1}{r}\right)^{r - 2 - \frac{1}{r-1}}, & \beta_r &\defeq r - 1, \\
        X &\defeq \alpha_r\,n^{\frac{r-2}{r-1}}\,k^{\frac{1}{r(r-1)}}, & B &\defeq \set{v\in V(G)\,:\, \deg_G(v) \geq X}.
    \end{align*}
    The following bound on $X$ follows as $r \geq 3$, $n \geq r^{2r}$, and $k \geq 1$:
    \begin{align*}
        X &= \left(\frac{r-1}{r}\right)^{r - 2 - \frac{1}{r-1}}\,n^{\frac{r-2}{r-1}}\,k^{\frac{1}{r(r-1)}} \\
        &\geq \left(\frac{r-1}{r}\right)^{r - 2 - \frac{1}{r-1}}\,r^{\frac{2r(r-2)}{r-1}} \\
        &= (r-1)^{2(r-1)- r - \frac{1}{r-1}}\,r^{\frac{2r(r-2) + 1}{r-1} - (r - 2)} \\
        &= (r-1)^{2(r-1)}\,(r-1)^{- r - \frac{1}{r-1}}\,r^{\frac{(r+1)(r-2) + 1}{r-1}} \\
        &= (r-1)^{2(r-1)}\,(r-1)^{- r - \frac{1}{r-1}}\,r^{\frac{r^2 - r - 1}{r-1}} \\
        &= (r-1)^{2(r-1)}\,\left(\frac{r}{r-1}\right)^{r}\,\frac{1}{(r(r-1))^{1/(r-1)}}.
    \end{align*}
    For $r = 3$, it can be verified that
    \[\left(\frac{r}{r-1}\right)^{r}\,\frac{1}{(r(r-1))^{1/(r-1)}} \geq 1.\]
    For $r \geq 4$, we have
    \begin{align*}
        \log(r(r-1)) \,\leq\, 2\log r \,\leq\, r - 1.
    \end{align*}
    It follows that
    \[(r(r-1))^{1/(r-1)} \leq e.\]
    Furthermore, we have
    \[(1 - 1/r)^r \leq \frac{1}{e} \implies \frac{1}{(1 - 1/r)^r} \geq e,\]
    In particular, we may conclude that
    \begin{align}\label{eq:X_bound}
        X \geq (r-1)^{2(r-1)}.
    \end{align}
    We will consider two cases based on the size of $B$.
    \begin{enumerate}[label=\ep{\textbf{Case\arabic*}}, leftmargin = \leftmargin + 1\parindent]
        \item\label{item:case1} $|B| \geq \beta_r\,k^{1/r}$.
        For each $v \in B$, let $n_v$ denote the number of copies of $K_r$ in $G$ containing $v$.
        By definition of $(k, r)$-sparsity, we have:
        \[\sum_{v \in B}n_v \leq rk,\]
        implying there is some $v \in B$ such that $n_v \leq rk/|B| \leq rk^{1-1/r}/\beta_r$.
        In particular, the graph $H \defeq G[N(v)]$ is $(\tilde k, r-1)$-sparse, where
        \[\tilde k \defeq \frac{r\,k^{1 - 1/r}}{\beta_r}.\]
        Note the following:
        \begin{align*}
            \frac{r\,k^{1 - 1/r}}{\beta_r} &\geq\, \frac{r}{(r-1)} \,>\, 1.
        \end{align*}
        Therefore, we conclude $\tilde k \geq 1$ and $|V(H)| \geq (r-1)^{2(r-1)}$ (by \eqref{eq:X_bound}).
        By the induction hypothesis, there is an independent set in $H$ of size at least
        \begin{align*}
            \frac{1}{r-1}\left(\frac{|V(H)|}{\tilde k^{1/(r-1)}}\right)^{1/(r-2)} &\geq \frac{1}{r-1}\left(\frac{X}{\left(r\,k^{1-1/r}/\beta_r\right)^{1/(r-1)}}\right)^{1/(r-2)} \\
            &= \frac{1}{r-1}\left(\frac{n}{k^{1/r}}\right)^{1/(r-1)}\left(\frac{\alpha_r^{r-1}\beta_r}{r}\right)^{1/((r-1)(r-2))} \\
            &= \frac{1}{r}\left(\frac{n}{k^{1/r}}\right)^{1/(r-1)},
        \end{align*}
        as desired.
        (We note that as $r \geq 3$, $1/(r-2)$ is well defined.)
        As this set is also independent in $G$, this concludes \ref{item:case1}.

        \item $|B| < \beta_r\,k^{1/r}$.
        First, we note the following:
        \[\frac{|B|}{X} \,<\, \frac{\beta_r\,k^{1/r}}{X} \,=\, \frac{\beta_r}{\alpha_r}\left(\frac{k^{1/r}}{n}\right)^{\frac{r-2}{r-1}} \,\leq\, \frac{\beta_r}{\alpha_r},\]
        where we use the fact that $k \leq \binom{n}{r} \leq n^r$.
        As a result, we have
        \begin{align*}
            d(G) \,\leq\, \frac{1}{n}\left(|B|n + (n - |B|)X\right) &\,\leq\, |B| + X \\
            &\leq \frac{X}{\alpha_r}\left(\beta_r + \alpha_r\right) \\
            &= \frac{rX}{\alpha_r} - \frac{X}{\alpha_r}\left(1 - \alpha_r\right).
        \end{align*}
        Let us consider the term on the right.
        We have
        \begin{align*}
            \frac{X}{\alpha_r}\left(1 - \alpha_r\right) &= n^{\frac{r-2}{r-1}}\,k^{\frac{1}{r(r-1)}}\left(1 - \left(1 - \frac{1}{r}\right)^{r - 2 - \frac{1}{r-1}}\right) \\
            &\geq n^{1/2}\left(1 - \exp\left(-\frac{1}{r}\left(r - 2 - \frac{1}{r-1}\right)\right)\right) \\
            &= n^{1/2}\left(1 - \exp\left(-1 + \frac{2}{r} + \frac{1}{r(r-1)}\right)\right) \\
            &\geq n^{1/2}\left(1 - \exp(-1/6)\right) > 1,
        \end{align*}
        where we use the fact that $n \geq r^{2r}$ and $r \geq 3$.
        In particular, we have
        \[d(G) \,\leq\, \frac{rX}{\alpha_r} - 1.\]
        By Lemma~\ref{lemma:greedy}, it follows that $G$ has an independent set of size at least
        \[\frac{n}{1 + d(G)} \quad\geq\quad \frac{\alpha_r\,n}{rX} \quad=\quad \frac{1}{r}\left(\frac{n}{k^{1/r}}\right)^{1/(r-1)},\]
        as desired.
    \end{enumerate}
    This covers all cases, completing the proof.
\end{proof}

With this lemma in hand, we are ready to complete \ref{step:ramsey} of our proof.

\begin{Lemma}\label{lemma: z_lb}
    Let $r, n \in \N$ and $k \in \R$ such that $r\geq 3$, $n \geq r^{2r}$, and $k \geq 1$.
    Let $G$ be an $n$-vertex $(k, r)$-sparse graph.
    For any positive integer $\alpha$ and $\lambda > 0$, we have
    \[\log Z_G(\lambda) \geq \alpha\left(\log(n\lambda) - \frac{\log k}{r} - (r-1)\log (r\alpha)\right).\]
\end{Lemma}

\begin{proof}
    Let $t \defeq t(k, r, \alpha)$ be the minimum integer such that every $(k,r)$-sparse graph on $t$ vertices has an independent set of size at least $\alpha$.
    By Lemma~\ref{lemma:independent set in kr sparse}, we know that $t \leq k^{1/r}(r\alpha)^{r-1}$.

    As $G$ is $(k, r)$-sparse, we may conclude that $G$ contains at least
    \[\dfrac{\binom{n}{t}}{\binom{n-\alpha}{t-\alpha}} \geq \left(\frac{n}{t}\right)^\alpha\]
    independent sets of size $\alpha$.

    It follows that
    \[Z_G(\lambda) \geq \left(\frac{n\lambda}{t}\right)^\alpha.\]
    The claim now follows by the upper bound on $t$ described earlier.
\end{proof}

\begin{Lemma}\label{lemma:partition_fn_bound}
    Let $r, n \in \N$ and $k \in \R$ such that $r\geq 3$, $n \geq r^{2r}$, and $k \geq 1$.
    Let $G$ be an $n$-vertex $(k, r)$-sparse graph.
    For any $\lambda > 0$ and $z = \log Z_G(\lambda)$, we have
    \[\frac{\lambda\,Z_G'(\lambda)}{Z_G(\lambda)} \geq \frac{1 - o_n(1)}{r - 2}\,\frac{z}{\log\left(e^{\frac{r}{r-2}}\,k^{\frac{1}{r(r-2)}}\,z\right)}.\]
\end{Lemma}

\begin{proof}
    First, let us apply Lemma~\ref{lemma: z_lb} with
    \[\alpha = \frac{1}{re^{r/(r-1)}}\left(\frac{n\lambda}{k^{1/r}}\right)^{1/(r-1)}.\]
    From here, it follows that
    \begin{align}
        z &\geq r\alpha = \frac{1}{e^{r/(r-1)}}\left(\frac{n\lambda}{k^{1/r}}\right)^{1/(r-1)} \label{eq: z_bound} \\
        &\implies \frac{n\lambda}{z} \leq e^{r}\,k^{1/r}\,z^{r-2}. \nonumber
    \end{align}
    For fixed $\lambda$, the RHS above tends to $\infty$ as $n$ (and hence, $z$) tends to $\infty$.
    The claim now follows by \cite[Lemma 19]{DKPS} (we note that the function $K(\cdot)$ in \cite[Lemma 19]{DKPS} satisfies $K(x) = (1+o(1))\log x$).
\end{proof}

\subsection{Local occupancy for $(\boldk, \boldr)$-locally-sparse graphs}

The main result of this section determines the (strong) local occupancy parameters for $(\boldk, \boldr)$-locally-sparse graphs, which will be crucial to our arguments in \S\ref{subsec:dkps} as well.

\begin{Lemma}\label{lemma:local_occ_parameters}
    For any $\lambda, \xi > 0$, there exists $\rho > 0$ such that the following holds for $n,\,d_0$ sufficiently large.
    Let $G$ be a graph and let $\boldk \,:\, V(G) \to \R$, $\boldr\,:\,V(G) \to \N$, and $\boldeps \,:\, V(G) \to [0, \infty)$ be such that $G$ is $(\boldk, \boldr)$-locally-sparse.
    Suppose additionally that the following hold for some positive reals $(d_u)_{u \in V(G)}$ and every vertex $v \in V(G)$:
    \begin{enumerate}[label= \ep{\normalfont S\arabic*}, leftmargin = \leftmargin + 1\parindent]
        \item\label{item:deg_bound_loc_occ} $d_v \geq d_0$, and
        \item\label{item:k_r_bound_loc_occ} $3 \leq \boldr(v) \leq \rho\log\log d_v/\log\log\log d_v$ and $\boldk(v) = d_v^{\boldeps(v)\boldr(v)}$.
    \end{enumerate}
    Then there is a choice of parameters $(\beta_u, \gamma_u)_{u \in V(G)}$ that satisfy
    \[\forall u \in V(G), \qquad \beta_u + \gamma_ud_u \leq (1+\xi)^2\,d_u\left(\boldeps(u) + \frac{\boldr(u)\log\log d_u}{\log d_u}\right)\]
    and strong local $(\beta_u, \gamma_u)_u$-occupancy in the hard-core model on $G$ at fugacity $\lambda$.
\end{Lemma}

\begin{proof}
    Let $\sigma \in (0, 1)$ to be specified later.
    Fix a vertex $u$ and let $F$ be a subgraph of $G[N(u)]$ on $t$ vertices.
    We will write $z = \log Z_F(\lambda)$, $r = \boldr(u)$, $k = \boldk(u)$, and $\eps = \boldeps(u)$ for brevity.

    Recall the definition of strong local occupancy from Definition~\ref{defn: local occupancy}. In particular, we must find choices of $\beta_u$ and $\gamma_u$ such that 
    \[\beta_u\frac{\lambda}{1 + \lambda}e^{-z} + \gamma_u \frac{\lambda Z_F'(\lambda)}{Z_F(\lambda)} \geq 1.\]
    The following function will assist with our proof:
    \[g(z) = \beta_u\frac{\lambda}{1 + \lambda}e^{-z} + \gamma_u \frac{1 - \sigma}{r - 2}\,\frac{z}{\log\left(\omega\,z\right)}, \qquad \text{where} \qquad \omega = e^{\frac{r}{r-2}}\,k^{\frac{1}{r(r-2)}}.\]
    Furthermore, let $t_0 \geq r^{2r}$ to be specified later.
    We claim that it is enough to choose $\beta_u$ and $\gamma_u$ such that
    \begin{align}
        g(z) &\geq 1 \qquad \text{whenever } t > t_0, \label{eq:g_main} \\
        \beta_u &\geq \frac{(1 + \lambda)^{1+ t_0}}{\lambda}. \label{eq:small_t}
    \end{align}
    Indeed, when $t \leq t_0$, $z \leq t_0 \log (1+\lambda)$ trivially and so the claim follows by \eqref{eq:small_t}.
    When $t > t_0$, the claim follows by \eqref{eq:g_main} and Lemma~\ref{lemma:partition_fn_bound}.

    For $t_0 < t \leq \deg(u)$, we have $z \in I = [\log(1 + t_0\lambda),\, \deg(u)\log (1 + \lambda)]$.
    Therefore, in order to establish \eqref{eq:g_main}, it is enough to investigate the minimum value of $g$ on this interval.
    Let $z^* \geq e/\omega$ be given by the following equation:
    \[d_u \frac{\lambda}{1 + \lambda}e^{-z^*} = \frac{1 - \sigma}{r - 2}\,\frac{z^*}{\log\left(\omega\,z^*\right)}.\]
    We claim that $z^*$ is unique.
    First, note that the LHS is decreasing in $z^*$ and the RHS is increasing for $z^* \geq e/\omega$.
    Furthermore, the LHS is strictly larger than the RHS for $d_0$ sufficiently large in terms of $\lambda$ and $\sigma$.
    Letting
    \[\tau_u = d_u\frac{\lambda}{1 + \lambda}\,\frac{r - 2}{1 - \sigma},\]
    we have
    \[z^* = \log \tau_u - \log\log\tau_u + \log\log (\omega\log \tau_u) + o(1),\]
    as $d_u$ (equivalently $\tau_u$) tends to infinity.

    We define $\beta_u$ and $\gamma_u$ by first solving for the parameters in the equations $g'(z^*) = 0$ and $g(z^*) = 1$, and then multiplying the result by $1/(1 - \sigma)$.
    Standard calculus arguments yield
    \[\beta_u = d_u\frac{r - 2}{(1 - \sigma)^2}\,\frac{\log(\omega z^*)(\log (\omega z^*) - 1)}{z^*((z^* + 1)\log(\omega z^*) - 1)}, \qquad \text{and} \qquad \gamma_u = \frac{r - 2}{(1 - \sigma)^2}\,\frac{\log^2(\omega z^*)}{(z^* + 1)\log(\omega z^*) - 1}.\]
    As $d_u \to \infty$, we have
    \begin{align*}
        \beta_u &\sim d_u\frac{r - 2}{(1 - \sigma)^2}\,\frac{\log(\omega z^*)}{(z^*)^2} \sim d_u\frac{r - 2}{(1 - \sigma)^2}\,\frac{\log(\omega \log ((r - 2)d_u))}{(\log ((r-2)d_u))^2}, \\
        \gamma_u &\sim \frac{r - 2}{(1 - \sigma)^2}\,\frac{\log(\omega z^*)}{z^*} \sim \frac{r - 2}{(1 - \sigma)^2}\,\frac{\log(\omega \log ((r - 2)d_u))}{\log ((r - 2)d_u)},
    \end{align*}
    implying
    \[\beta_u + \gamma_ud_u \sim d_u\frac{r - 2}{(1 - \sigma)^2}\,\frac{\log(\omega \log ((r - 2)d_u))}{\log ((r - 2)d_u)}.\]
    Setting $t_0 = \frac{\log d_u}{2\log(1+\lambda)}$, we satisfy \eqref{eq:small_t} (note that $t_0 \geq r^{2r}$ by \ref{item:k_r_bound_loc_occ}).
    Furthermore, choosing $\sigma$ sufficiently small in terms of $\xi$, we have
    \[\beta_u + \gamma_ud_u \leq (1+\xi)d_u\,(r - 2)\,\frac{\log(\omega \log d_u)}{\log d_u}.\]

    It remains to justify that $z^*$ is the minimizer of $g$ on the interval $I$.
    Note that we may assume that $z^* \in I$ by extending the interval if needed and so it is sufficient to check the endpoints of $I$.
    For $d_0$ sufficiently large, we have
    \[g(\log(1 + t_0\lambda)) > \beta_u\frac{\lambda}{(1+\lambda)(1+t_0\lambda)} > 1.\]
    As $z^* \in I$, we may assume that $\deg(u)\log (1+\lambda) \geq z^*$, implying
    \begin{align*}
        g(\deg(u)\log (1+\lambda)) &> \gamma\,\frac{1 - \sigma}{r - 2}\,\frac{\deg(u)\log (1+\lambda)}{\log\left(\omega\,\deg(u)\log (1+\lambda)\right)} \\
        &\geq \gamma\,\frac{1 - \sigma}{r - 2}\,\frac{z^*}{\log\left(\omega\,z^*\right)} \\
        &\sim \frac{1}{1 - \sigma} > 1,
    \end{align*}
    for $d_0$ sufficiently large.

   With this in hand, we have
    \begin{align*}
        \beta_u + \gamma_ud_u &\leq (1+\xi)\,d_u\left(\eps + \frac{(r-2)\log \log d_u + r}{\log d_u}\right) \\
        &\leq (1+\xi)^2\,d_u\left(\eps + \frac{r\log \log d_u}{\log d_u}\right),
    \end{align*}
    for $d_0$ sufficiently large in terms of $\lambda$ and $\xi$.
\end{proof}

In order to prove Theorem~\ref{theo:occ_frac}, note that for a fixed $\lambda > 0$ we may apply Lemma~\ref{lemma:local_occ_parameters} with $d_u = \Delta$ for each $u \in V(G)$, and an arbitrary $\xi > 0$.
The claim then follows by Theorem~\ref{theo:loc_occ_ISET}.

\section{Coloring locally sparse graphs}\label{sec:coloring_loc_occ}

In this section, we will prove our coloring results.
We will further split this section into two subsections devoted to the proofs of Theorems~\ref{theo:coloring_local_occ} and \ref{theo:coloring_local_general_result}, respectively.

\subsection{Proof of Theorem~\ref{theo:coloring_local_occ}}\label{subsec:dkps}

For convenience, we restate the result.

\begin{theo*}[Restatement of Theorem~\ref{theo:coloring_local_occ}]
    There exists $\rho > 0$ such that the following holds for $\mu > 0$ and $\Delta_0$ sufficiently large.
    Let $G$ be a graph of maximum degree $\Delta \geq \Delta_0$ and let $\boldk \,:\, V(G) \to \R$, $\boldr\,:\,V(G) \to \N$, and $\boldeps \,:\, V(G) \to [0, 1]$ be such that $G$ is $(\boldk, \boldr)$-locally-sparse.
    Let $\eps_{\max} = \max_u\boldeps(u)$ and $r_{\max} = \max_u \boldr(u)$.
    Suppose the following hold for each $v \in V(G)$:
    \begin{enumerate}[label= \ep{\normalfont L\arabic*}, leftmargin = \leftmargin + 1\parindent]
        \item\label{item:deg_bound} $\deg(v) \geq \Delta^{\min\set{2\eps_{\max},\, (1+\eps_{\max})/2}}\left(\log(8\Delta^4)\right)^{r_{\max}}$, and
        \item\label{item:k_r_bound} $3 \leq \boldr(v) \leq \rho\log\log \deg(v)/\log\log\log \deg(v)$ and $\boldk(v) = \deg(v)^{\boldeps(v)\boldr(v)}$.
    \end{enumerate}
    If $\mathcal{H} = (L, H)$ is a correspondence cover of $G$ satisfying
    \[\forall u \in V(G), \qquad |L(u)| \geq (1+\mu)\min\left\{2,\,\left(\frac{1 + \eps_{\max}}{1-\eps_{\max}}\right)\right\}\deg(u)\left(\boldeps(u) + \frac{r\log\log\deg(u)}{\log \deg(u)}\right),\]
    then $G$ admits a proper $\mathcal{H}$-coloring.
\end{theo*}

Throughout the proof we will assume $\mu$ is sufficiently small.
As mentioned in \S\ref{sec:overview}, the bound on $|L(u)|$ is worse than the greedy bound for $\eps_{\max} > 1/2$ and so we will assume $\eps_{\max} \leq 1/2$ for the proof.
To prove the above, we will apply the following result of Davies, Kang, Pirot, and Sereni:

\begin{theo}[{\cite[Theorem 12]{DKPS}}]\label{theo:DKPS 12}
    Suppose that $G$ is a graph of maximum degree $\Delta \geq 2^6$ such that the hard-core model on $G$ at fugacity $\lambda$ has local $(\beta_u, \gamma_u)_u$-occupancy for some $\lambda > 0$ and some collection $(\beta_u, \gamma_u)_u$ of positive reals.
    Suppose also for some $\ell > \log \Delta$ that we are given a correspondence cover $\mathcal{H} = (L, H)$ of $G$ that arises from a list-assignment of $G$, that satisfies
    \[|L(u)| \geq \beta_u\frac{\lambda}{1 + \lambda}\frac{\ell}{1 - \sqrt{(7\log \Delta)/\ell}} + \gamma\deg(u),\]
    for all $u \in V(G)$ and
    \[Z_F(\lambda) \geq 8\Delta^4,\]
    for all induced subgraphs $F \subseteq G[N(u)]$ on at least $\ell/8$ vertices.
    Then $G$ admits an $\mathcal{H}$-coloring.

    If $\mathcal{H}$ does not arise from a list-assignment, then strong local $(\beta_u, \gamma_u)_u$-occupancy is sufficient for an $\mathcal{H}$-coloring.
\end{theo}

Fix an arbitrary $\lambda > 0$ and let $\xi$ be such that $(1+\xi)^3 < 1 + \mu$.
Consider an arbitrary $F \subseteq G[N(u)]$ on $t \geq r^{2r}$ vertices, where $r = \boldr(u)$.
Clearly, $F$ is $(k, r)$-sparse, where $k = \boldk(u)$.
From \eqref{eq: z_bound}, we have
\[\log Z_F(\lambda) \geq \frac{1}{e^{r/(r-1)}}\left(\frac{t\lambda}{k^{1/r}}\right)^{1/(r-1)}.\]
It follows that the condition on $Z_F(\lambda)$ is satisfied for $\ell$ defined as follows:
\[\ell \defeq \frac{k_{\max}^{1/r_{\max}}e^{r_{\max}}\left(\log (8\Delta^4)\right)^{r_{\max} - 1}}{\lambda},\]
where $k_{\max} = \max_u\boldk(u)$.
(Note that as $r_{\max} \geq 3$ and $k_{\max}\geq 1$, we have $\ell = \omega(\log \Delta)$ as $\Delta \to \infty$.)

The goal is to apply Lemma~\ref{lemma:local_occ_parameters} with
\[d_u = \frac{\deg(u)}{\frac{\lambda}{1 - \lambda}\frac{\ell}{1 - \sqrt{(7\log \Delta)/\ell}}}\]
To this end, we note the following:
\begin{align*}
    d_u &= (1-o(1))\frac{\deg(u)}{\frac{\lambda\,\ell}{1 - \lambda}} \\
    &\geq (1-o(1))(1 - \lambda)\frac{\deg(u)}{k_{\max}^{1/r_{\max}}e^{r_{\max}}\left(\log (8\Delta^4)\right)^{r_{\max} - 1}} \\
    &\geq (1-o(1))(1 - \lambda)\frac{\deg(u)}{\Delta^{\eps_{\max}}e^{r_{\max}}\left(\log (8\Delta^4)\right)^{r_{\max} - 1}} \\
    &\geq \Delta^{(1 - o(1))\min\set{\eps_{\max},(1-\eps_{\max})/2}}.
\end{align*}
In particular, $d_u$ is at least $d_0$ for $\Delta_0$ sufficiently large.
Similarly, we may conclude that
\[d_u \geq \deg(u)^{(1-o(1))\max\{1/2,\,(1-\eps_{\max})/(1+\eps_{\max})\}}.\]
In particular, we may apply Lemma~\ref{lemma:local_occ_parameters} with $(d_u)_u$ and $\boldr$ as defined, along with the following functions:
\begin{align*}
    \tilde{\boldeps}(u) &= (1+o(1))\boldeps(u)\min\left\{2,\,\left(\frac{1 + \eps_{\max}}{1-\eps_{\max}}\right)\right\}, \\
    \tilde{\boldk}(u) &= d_u^{\tilde{\boldeps(u)}}.
\end{align*}
To complete the proof, it is now enough to show that
\[|L(u)| \geq \frac{\lambda}{1+\lambda}\frac{\ell}{1 - \sqrt{(7\log \Delta)/\ell}}(\beta_u + \gamma_ud_u),\]
for all $u \in V(G)$.
To this end, we note the following:
\begin{align*}
    &\qquad\frac{\lambda}{1+\lambda}\frac{\ell}{1 - \sqrt{(7\log \Delta)/\ell}}(\beta_u + \gamma_ud_u) \\
    &\leq \frac{\lambda}{1+\lambda}\frac{\ell}{1 - \sqrt{(7\log \Delta)/\ell}}(1+\xi)^2d_u\left(\tilde{\boldeps}(u) + \frac{\log \log d_u}{\log d_u}\right) \\
    &\leq (1+\xi)^3\min\left\{2,\,\left(\frac{1 + \eps_{\max}}{1-\eps_{\max}}\right)\right\}\deg(u)\left(\boldeps(u) + \frac{\log \log \deg(u)}{\log \deg(u)}\right),
\end{align*}
completing the proof.

\subsection{Proof of Theorem~\ref{theo:coloring_local_general_result}}\label{subsec:bknp}

For convenience, we restate the result.

\begin{theo*}[Restatement of Theorem~\ref{theo:coloring_local_general_result}]
    There exists $\rho > 0$ such that the following holds for $\mu > 0$ and $\Delta$ sufficiently large.
    Let $G$ be a graph of maximum degree $\Delta$ and let $\boldk \,:\, V(G) \to \R$, $\boldr\,:\,V(G) \to \N$, and $\boldeps \,:\, V(G) \to [0, 1]$ be such that $G$ is $(\boldk, \boldr)$-locally-sparse and the following hold for each $v \in V(G)$:
    \begin{enumerate}[label= \ep{\normalfont M\arabic*}, leftmargin = \leftmargin + 1\parindent]
        \item\label{item:deg_bound_median} $\deg(v) \geq \log^2\Delta$, and
        \item\label{item:k_r_bound_median} $3 \leq \boldr(v) \leq \rho\log\log \deg(v)/\log\log\log \deg(v)$ and $\boldk(v) = \deg(v)^{\boldeps(v)\boldr(v)}$.
    \end{enumerate}
    If $\mathcal{H} = (L, H)$ is a correspondence cover of $G$ satisfying
    \[\forall u \in V(G), \qquad |L(u)| \geq (30+\mu)\deg(u)\left(\boldeps(u) + \frac{r\log\log\deg(u)}{\log \deg(u)}\right),\]
    then $G$ admits a proper $\mathcal{H}$-coloring.
\end{theo*}

Throughout the proof we will assume $\mu$ is sufficiently small.
As mentioned in \S\ref{sec:overview}, we will apply a result of Bonamy, Kelly, Nelson, and Postle.
Before we do so, we need an auxiliary result on the \emphd{median independence number} (denoted $\overline{\alpha}(G)$) of $(k,r)$-sparse graphs defined as follows:
\begin{align*}
    \overline{\alpha}(G) &\defeq \max\left\{\ell \in \N\,:\,|\set{I \in \mathcal{I}(G)\,:\, |I| \geq \ell}| \geq \frac{i(G)}{2}\right\},
\end{align*}
where $i(G) = |\mathcal{I}(G)|$.

\begin{Lemma}\label{lemma:median}
    Let $r,\, n \in \N$ and $k \in \R$ such that $r\geq 2$, $n \geq r^{2r}$, and $k \geq 1$.
    Let $G$ be an $n$-vertex $(k, r)$-sparse graph.
    Then,
    \[\overline{\alpha}(G) \geq \frac{\log\left(i(G)\right)}{2r\log\left(r\,k^{1/(r(r-1))}\log\left(i(G)\right)\right)}.\]
\end{Lemma}

\begin{proof}
    It is enough to show that at most half of the independent sets in $i(G)$ have size at most 
    \[\ell \defeq \frac{\log\left(i(G)\right)}{2(r-1)\log\left(r\,k^{1/(r(r-1))}\log\left(i(G)\right)\right)}.\]
    In fact, we will show something stronger, i.e.,
    \[\sum_{i = 1}^\ell \binom{n}{i} \leq \frac{i(G)}{2}.\]
    If $\ell < 1$, the claim is trivial, so we may assume $\ell \geq 1$.
    Note the following:
    \[\log\left(i(G)\right) \,\leq\, n \,\leq\, k^{1/r}\left(r\log\left(i(G)\right)\right)^{r-1}.\]
    (The first inequality is trivial and the second follows due to Lemma~\ref{lemma:independent set in kr sparse}.)
    In particular, the first inequality implies that $\ell \leq n/2$ (assuming $i(G) \geq 2$, which holds for $n \geq 2$).
    With this in hand, we have
    \begin{align*}
        \sum_{i = 1}^\ell \binom{n}{i} &\leq \sum_{i = 1}^\ell n^i 
        \leq \ell\,n^\ell 
        \leq \frac{1}{2}\,n^{2\ell} 
        \\ &\leq \frac{1}{2}\left(k^{1/r}\left(r\log\left(i(G)\right)\right)^{r-1}\right)^{2\ell}
        \\ &\leq \frac{1}{2}\,x^{2(r-1)\ell},
    \end{align*}
    for $x \defeq r\,k^{1/(r(r-1))}\log\left(i(G)\right)$.
    From here, it follows that
    \[x^{2(r-1)\ell} \quad=\quad e^{2(r-1)\ell\log x} \quad=\quad i(G),\]
    completing the proof.
\end{proof}

We are now ready to apply the following result of Bonamy, Kelly, Nelson, and Postle:

\begin{theo}[{\cite[Theorem~1.13]{bonamy2022bounding}}]\label{theo:BKNP}
    There exists $\Delta_0 \in \N$ such that the following holds.
    Let $G$ be a graph of maximum degree at most $\Delta \geq \Delta_0$ with a correspondence cover $\mathcal{H} = (L, H)$ and let $\epsilon \in (0, 1/2)$.
    Let $\ell, t\,:\, V(G) \to \N$, and for each $v \in V(G)$, define $\alpha_{\min}(v)$ as follows:
    \begin{align}\label{eq:alpha_min}
        \alpha_{\min}(v) \defeq \min_{\substack{S \subseteq N_G(v), \\ i(G[S]) \geq t(v)}} \overline{\alpha}(G[S]).
    \end{align}
    If for each $v \in V(G)$,
    \[|L(v)| \geq \max\left\{\frac{2\deg(v)}{(1 - \eps)^2\alpha_{\min}(v)}, \frac{2\,t(v)\,\ell(v)}{\eps}\right\},\]
    and
    \begin{enumerate}[label= \ep{\normalfont C\arabic*}, leftmargin = \leftmargin + 1\parindent]
        \item\label{item:1} $\eps(1 - \eps)\ell(v)t(v) \geq 18\log \Delta + 6\log 16$,
        \item\label{item:2} $\ell(v) \geq 36\log \Delta + 12\log 16$, and
        \item\label{item:3} $\binom{\deg(v)}{\ell(v)}/\ell(v)! < \Delta^{-3}/8$.
    \end{enumerate}
    Then $G$ admits a proper $\mathcal{H}$-coloring.
\end{theo}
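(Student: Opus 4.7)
The plan is to prove Theorem~\ref{theo:BKNP} via a randomized partial coloring procedure combined with the asymmetric Lov\'asz Local Lemma, in the spirit of the wasteful-coloring/entropy-compression approach pioneered by Molloy and Bernshteyn and refined by Bonamy, Kelly, Nelson, and Postle. The high-level idea is that each vertex will independently sample a small random subset of its list, and we will show that with positive probability every vertex retains at least one color unblocked by its neighbors. This yields a proper $\mathcal{H}$-coloring directly (each vertex picks any surviving color).

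First I would set up the random experiment. For each vertex $v \in V(G)$, independently choose a subset $T_v \subseteq L(v)$ of size $\ell(v)$, uniformly at random. Call a color $c \in T_v$ \emph{available} if no neighbor $u$ has sampled a color in $T_u$ that is correspondence-matched to $c$ via the matching $M_{uv}$ in $H$. Let $A_v$ be the bad event that every color in $T_v$ is blocked. Since $A_v$ depends only on the random choices inside the second neighborhood of $v$, the dependency graph of the $A_v$'s has degree $O(\Delta^2)$, and establishing $\Pr[A_v] \leq \Delta^{-3}$ for all $v$ is enough for the asymmetric Lov\'asz Local Lemma to produce an outcome in which no $A_v$ occurs.

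The crux of the argument is bounding $\Pr[A_v]$, and this is where $\alpha_{\min}(v)$ enters. A naive union bound over the independent sets of $G[N(v)]$ that ``cover'' $T_v$ in the correspondence sense is too weak; the refinement is to split the analysis according to the set $S \subseteq N_G(v)$ of neighbors whose sampled colors $T_u$ actually threaten some color in $L(v)$. When $i(G[S]) \geq t(v)$, the definition of $\alpha_{\min}(v)$ forces $\overline{\alpha}(G[S]) \geq \alpha_{\min}(v)$, so a typical correspondence-legal assignment on $S$ must consume at least $\alpha_{\min}(v)$ distinct colors of $L(v)$; combined with the list-size bound $|L(v)| \geq 2\deg(v)/((1-\eps)^2 \alpha_{\min}(v))$ this shows the expected fraction of blocked colors in $L(v)$ is at most $1-\Omega(\eps)$, so with high probability $T_v$ contains an available color. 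When $i(G[S]) < t(v)$, the subset $S$ comes from a restricted ``atypical'' family whose total contribution is absorbed by condition~\ref{item:3} via a union bound weighted by $\binom{\deg(v)}{\ell(v)}$.

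Conditions~\ref{item:1} and~\ref{item:2} supply the $\Omega(\log \Delta)$ slack required to upgrade the expectation bounds into high-probability concentration for the number of available colors in $T_v$, via Chernoff or Azuma-type inequalities; the second list-size bound $|L(v)| \geq 2t(v)\ell(v)/\eps$ guarantees the random samples $T_u$ are sufficiently spread out for these concentrations to apply. Once these estimates combine into $\Pr[A_v] \leq \Delta^{-3}$, the Local Lemma closes the argument. The main obstacle is the split-by-$S$ analysis: one must handle the correlations between the random samples at different neighbors of $v$ and show that the lower bound $\overline{\alpha}(G[S]) \geq \alpha_{\min}(v)$ can be leveraged uniformly across all typical $S$, so that the list-size requirement degrades with $\alpha_{\min}(v)$ rather than with $\deg(v)$ alone. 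This is precisely the step where the local sparsity hypothesis is exploited, and getting the constants to fit with the conditions \ref{item:1}--\ref{item:3} is the most delicate part of the argument.
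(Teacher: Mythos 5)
The paper does not prove Theorem~\ref{theo:BKNP}: it is imported verbatim as Theorem~1.13 of Bonamy, Kelly, Nelson, and Postle \cite{bonamy2022bounding}, and everything in \S\ref{sec:coloring} after its statement is the proof of Theorem~\ref{theo:coloring_local}, which merely \emph{verifies the hypotheses} of Theorem~\ref{theo:BKNP} and invokes it. So there is no in-paper argument against which your proposal can be compared; the theorem is used as a black box.

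Assessed on its own terms, your sketch has the right genealogy (an LLL-driven analysis of a random partial coloring, with $\overline{\alpha}$ entering through the distribution of random proper colorings of a neighborhood, and with condition~\ref{item:3} absorbing a counting term over atypical subsets), but it is not a correct reconstruction of the cited proof and has a real quantitative gap. In your experiment each vertex draws a size-$\ell(v)$ subset $T_v \subseteq L(v)$ independently; then $\bigcup_{u\in N(v)} T_u$ can block on the order of $\deg(v)\,\ell$ colors of $L(v)$, and under the parameterization actually needed (in \S\ref{sec:coloring} the paper sets $\ell(v)=\deg(v)^{1/2+\eta}$, $t(v)=\deg(v)^{1/2-2\eta}$, $|L(v)|\asymp t(v)\ell(v)=\deg(v)^{1-\eta}$) this exceeds $|L(v)|$ by a polynomial factor, so ``every color in $T_v$ is blocked'' is not a rare event in your model. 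The mechanism you'd need is the coupling, after suitable conditioning, between a random proper partial coloring of a subset $S\subseteq N(v)$ and a uniformly random independent set of the correspondence cover graph on $S$; the quantity that matters is the number of \emph{distinct} colors of $L(v)$ that such a random independent set deletes, and $\overline{\alpha}(G[S])$ enters as a median lower bound on the size of that set so that $|L(v)|\geq 2\deg(v)/((1-\eps)^2\alpha_{\min}(v))$ leaves a $\Theta(\eps)$ fraction of $L(v)$ alive in the typical case $i(G[S])\geq t(v)$. Your sentence ``a typical correspondence-legal assignment on $S$ must consume at least $\alpha_{\min}(v)$ distinct colors of $L(v)$'' has the dependence backwards relative to the hypothesis (larger $\alpha_{\min}$ permits \emph{smaller} lists), which suggests the coupling is where your picture is blurry. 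None of this affects the paper's correctness, since it cites rather than proves the theorem; if you want to reconstruct the argument you should work from \cite{bonamy2022bounding} and \cite{bernshteyn2019johansson} rather than reverse-engineering the statement.
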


Let $\eps = 1/4$, $\ell(v) \defeq \deg(v)^{1/2 + \gamma}$, and $t(v) \defeq \deg(v)^{1/2 - 2\gamma}$ for some constant $\gamma \in (0, 1/10)$ to be determined.
For each $v \in V(G)$, let $S(v) \subseteq N_G(v)$ be the minimizer of \eqref{eq:alpha_min}.
As $G$ is $(\boldk, \boldr)$-locally-sparse, it follows that $G[S(v)]$ is as well.
Furthermore, we note the following as a result of \ref{item:deg_bound_median} and \ref{item:k_r_bound_median} for $\rho$ sufficiently small:
\[|S(v)| \,\geq\, \log_2 t(v) \,\geq\, \frac{\log \deg(v)}{4} \,\geq\, \boldr(v)^{2\boldr(v)}.\]
Therefore, by Lemma~\ref{lemma:median}, we have the following:
\begin{align*}
    \alpha_{\min}(v) &\geq \frac{\log t(v)}{2\boldr(v)\log\left(\boldr(v)\tilde\boldk(v)^{\frac{1}{\boldr(v)(\boldr(v) - 1)}}\log t(v)\right)} \\
    &\geq \frac{\log \deg(v)}{8\boldr(v)\log\left(\boldr(v)\boldk(v)^{\frac{1}{\boldr(v)(\boldr(v) - 1)}}\log \deg(v)\right)} \\
    &= \frac{\log \deg(v)}{8\boldr(v)\left(\log\boldr(v) + \frac{1}{\boldr(v)(\boldr(v) - 1)}\log \boldk(v) + \log\log \deg(v)\right)} \\
    &\geq \frac{1}{(8+\mu/2)\left(\boldeps(v) + \frac{\boldr(v) \log\log\deg(v)}{\log\deg(v)}\right)},
\end{align*}
where the last step follows by the bounds on $\boldr(v)$, the definition of $\boldeps(v)$, and for $\mu$ sufficiently small.
From here, we may conclude for $\deg(v)$ large enough (which follows for $\Delta$ large enough by \ref{item:deg_bound_median}) that
\[(30+\mu)\deg(v)\left(\boldeps(v) + \frac{\boldr(v) \log\log\deg(v)}{\log\deg(v)}\right) \,\geq\, \max\left\{\frac{2\deg(v)}{(1 - \eps)^2\alpha_{\min}(v)}, \frac{2\,t(v)\,\ell(v)}{\eps}\right\}.\]
In particular, it is now enough to show that conditions \ref{item:1}--\ref{item:3} are satisfied.
Let us first consider \ref{item:1}.
We have
\begin{align*}
    \eps(1 - \eps)\ell(v) t(v) &= \frac{3\,\deg(v)^{1 - \gamma}}{16} \geq \frac{3}{16}(\log \Delta)^{2- 2\gamma} \geq 18\log \Delta + 6\log 16,
\end{align*}
where the first inequality follows by \ref{item:deg_bound_median} and the second for $\Delta$ large enough.

Similarly, we have
\[\ell(v) = \deg(v)^{1/2 + \gamma} \geq (\log \Delta)^{1 + 2\gamma} \geq 36\log \Delta + 12\log 16,\]
completing the proof of \ref{item:2}.

To prove \ref{item:3}, we will use the following inequality due to Stirling for $n$ large enough:
\[n! \geq n^{n + 1/2}e^{-n}.\]
With this in hand, we have
\begin{align*}
    \binom{\deg(v)}{\ell(v)}/\ell(v)! \,\leq\, \frac{\deg(v)^{\ell(v)}}{(\ell(v)!)^2}
    \,\leq\, \left(\frac{e^2\deg(v)}{\ell(v)^{2 + 1/\ell(v)}}\right)^{\ell(v)}
    = \left(\frac{e^2}{\deg(v)^{2\gamma + (1+2\gamma)/(2\ell(v))}}\right)^{\ell(v)}.
\end{align*}
Taking logs on both sides, it is enough to show that
\[\ell(v)\left((2\gamma + (1+2\gamma)/\ell(v))\log\deg(v) - 2\right) \geq 3\log(2\Delta).\]
As $\ell(v) > (\log\Delta)^{1+2\gamma}$, it is enough to have
\[(2\gamma + (1+2\gamma)/\ell(v))\log\deg(v) > 2,\]
which follows by \ref{item:deg_bound_median} for $\Delta$ large enough and $\gamma = 1/100$.

\subsection*{Acknowledgements}
We thank Anton Bernshteyn and Clayton Mizgerd for helpful discussions.
We also thank the anonymous referees for their helpful suggestions.
In particular, for the suggestion to employ the local occupancy framework, which greatly simplified the exposition as well as improved the results from an earlier version of this manuscript.

\vspace{0.1in}
\printbibliography

\appendix

\section{Proof of Proposition~\ref{prop:embedding}}\label{appendix:prop}

For convenience, we restate the result.

\begin{prop*}[Restatement of Proposition~\ref{prop:embedding}]
    Let $G$ be a graph of maximum degree $\Delta$ and let $\boldk\,:\,V(G) \to \R$ and $\boldr\,:\,V(G) \to \N$ be such that $G$ is $(\boldk, \boldr)$-locally-sparse.
    For any $\delta \leq \Delta$, there is a graph $G'$ and functions $\tilde \boldk\,:\,V(G') \to \R$ and $\tilde \boldr\,:\,V(G') \to \N$ such that $G'$ is $(\tilde\boldk, \tilde\boldr)$-locally-sparse and the following hold for $j \defeq \max\set{\delta - \delta(G), 0}$:
    \begin{enumerate}[label= \ep{\normalfont I\arabic*}, leftmargin = \leftmargin + 1\parindent]
        \item\label{item:delta_app} $\delta(G') \geq \delta$,
        \item\label{item:Delta_app} $\Delta(G') = \Delta$,
        \item\label{item:size_app} $|V(G')| = |V(G)|\,2^{j}$, and
        \item\label{item:loc_sparsity_app} for $\ell \defeq 2^{j}$, there exist homomorphisms $\phi_1, \ldots, \phi_\ell\,:\,G\to G'$ such that 
        \begin{itemize}
            \item for each $v \in V(G)$ and $\ell' \in [\ell]$, we have $\boldk(v) = \tilde \boldk (\phi_{\ell'}(v))$ and $\boldr(v) = \tilde \boldr (\phi_{\ell'}(v))$, and
            \item for $\ell_1, \ell_2 \in [\ell]$ such that $\ell_1 \neq \ell_2$, the images $\im(\phi_{\ell_1})$ and $\im(\phi_{\ell_2})$ are vertex-disjoint.
        \end{itemize}
    \end{enumerate}
\end{prop*}

\begin{proof}
    If $\delta(G) \geq \delta$, the claim holds for $G' = G$.
    If not, we will define $G'$ iteratively as follows:
    \begin{enumerate}
        \item Let $G_0 \defeq G$.
        \item For $i \geq 0$, define $G_{i+1}$ by taking two vertex-disjoint copies of $G_i$, say $G_i^{(1)}$ and $G_i^{(2)}$.
        For each $v \in V(G_i)$ such that $\deg_{G_i}(v) < \delta$, draw an edge between the corresponding vertices in $G_i^{(1)}$ and $G_i^{(2)}$.
        \item Let $G' = G_j$, where $j$ is as defined in the statement of the proposition.
    \end{enumerate}
    Note that \ref{item:size_app} holds by construction.
    We will first show that the graphs $G_i$ satisfy $\Delta(G_i) = \Delta$ for all $0 \leq i \leq j$.
    For $i = 0$, this holds trivially.
    Suppose it holds for some $0\leq i < j$.
    Let $G_i^{(1)}$ and $G_i^{(2)}$ be the copies of $G_i$ in $G_{i+1}$.
    For $s \in \set{1, 2}$ and any vertex $v \in V(G_i^{(s)})$, $\deg_{G_{i+1}}(v) \neq \deg_{G_i^{(s)}}(v)$ if and only if $\deg_{G_i^{(s)}}(v) < \delta \leq \Delta$.
    Furthermore, by construction, we have the following:
    \begin{align}\label{eq:degree_change}
        \deg_{G_i^{(s)}}(v) < \delta \implies \deg_{G_{i+1}}(v) = \deg_{G_i^{(s)}}(v) + 1 \leq \delta.
    \end{align}
    As any vertex of degree $\Delta \geq \delta$ in $G_i$ has the same degree in $G_{i+1}$,
    it follows that $\Delta(G_{i+1}) = \Delta$.
    Setting $i = j$ completes the proof of \ref{item:Delta_app}.

    We will define functions $\boldk_i\,:\, V(G_i) \to \R$ and $\,\boldr_i\,:\,V(G_i) \to \N$ such that $G_i$ is $(\boldk_i,\boldr_i)$-locally-sparse to assist with the proof of \ref{item:loc_sparsity_app}.
    Let $\boldk_0 \defeq \boldk$ and $\boldr_0 \defeq \boldr$.
    Suppose we have defined $\boldk_i,\,\boldr_i$ for some $i \geq 0$.
    Let $G_i^{(1)}$ and $G_i^{(2)}$ be the copies of $G_i$ in $G_{i+1}$.
    For $s \in \set{1, 2}$ and any vertex $v \in V(G_i^{(s)})$, the graph $G_{i+1}[N_{G_{i+1}}(v)]$ is either isomorphic to $G_{i}^{(s)}[N_{G_{i}^{(s)}}(v)]$ or contains one additional isolated vertex.
    In particular, for $\boldk_{i+1}(v) = \boldk_i(v)$ and $\boldr_{i+1}(v) = \boldr_i(v)$, the graph $G_{i+1}$ is $(\boldk_{i+1}, \boldr_{i+1})$-locally-sparse.

    Let us now consider \ref{item:loc_sparsity_app}.
    We will prove the following more general statement which implies \ref{item:loc_sparsity_app} for $\tilde \boldk \defeq \boldk_j$ and $\tilde \boldr \defeq \boldr_j$.

    \begin{claim}
        For each $0 \leq i \leq j$, there exist $\ell_i \defeq 2^{i}$ homomorphisms $\phi_1, \ldots, \phi_{\ell_i}\,:\,G\to G_i$ such that 
        \begin{enumerate}
            \item\label{item:boldkr} for each $v \in V(G)$ and $\ell' \in [\ell_i]$, we have $\boldk(v) = \tilde \boldk_i (\phi_{\ell'}(v))$ and $\boldr(v) = \tilde \boldr_i (\phi_{\ell'}(v))$, and
            \item for $\ell_1, \ell_2 \in [\ell_i]$ such that $\ell_1 \neq \ell_2$, the images $\im(\phi_{\ell_1})$ and $\im(\phi_{\ell_2})$ are vertex-disjoint.
        \end{enumerate}
    \end{claim}

    \begin{claimproof}
        We will prove this by induction on $i$.
        For $i = 0$, the claim is trivial.
        Suppose the claim holds for some $0 \leq i < j$.
        Let $G_i^{(1)}$ and $G_i^{(2)}$ be the copies of $G_i$ in $G_{i+1}$, and for $s \in \set{1, 2}$ let $\phi_1^{(s)}, \ldots, \phi_{\ell_i}^{(s)}\, :\, G \to G_i^{(s)}$ be the homomorphisms guaranteed by the induction hypothesis.
        By definition of $\boldk_{i+1}$ and $\boldr_{i+1}$ and since the graphs $G_i^{(1)}$ and $G_i^{(2)}$ are vertex-disjoint, it follows that the homomorphisms 
        \[\set{\phi_\ell^{(s)}\,:\, s \in \set{1, 2},\, 1 \leq \ell \leq \ell_i},\]
        satisfy the conditions of the claim.
        As $\ell_{i+1} = 2\ell_i$, this completes the proof.
    \end{claimproof}

    Finally, note that by \eqref{eq:degree_change}, we may conclude the following for $i \geq 0$:
    \[\delta(G_i) < \delta \implies \delta(G_{i+1}) = \delta(G_i) + 1.\]
    In particular, \ref{item:delta_app} holds for $G'$ by definition of $j$, completing the proof.
\end{proof}

\end{document}